\newcommand{\bb}{\mathbb}
\newcommand{\supp}{\mathrm{supp}}
\renewcommand{\Re}{\mathrm{Re}}
\renewcommand{\Im}{\mathrm{Im}}
\newtheorem{theorem}{Theorem}
\newtheorem{lemma}[theorem]{Lemma}
\theoremstyle{definition}
\theoremstyle{remark}
\newtheorem*{remark}{Remark}
\numberwithin{equation}{section}
\numberwithin{theorem}{section}
\theoremstyle{remark}
\newtheorem*{remarks}{Remarks}
\newcommand{\bbR}{\mathbb{R}}
\newcommand{\bbC}{\mathbb{C}}
\newcommand{\bbH}{\mathbb{H}}
\newcommand{\calJ}{\mathcal{J}}
\newcommand{\Arg}{\operatorname{Arg}}
\begin{document}
	\title[Multiplicative perturbations of classical $\beta$-ensembles]{Multiplicative non-Hermitian perturbations of classical $\beta$-ensembles}	

	\author{G\"{o}kalp Alpan}
	\address{Faculty of engineering and natural sciences, Sabancı University, İstanbul,\newline Turkey.}
	\email{gokalp.alpan@sabanciuniv.edu}
	\author{Rostyslav Kozhan}
\address{Department of Mathematics, Uppsala University, Uppsala, Sweden.}
\email{rostyslav.kozhan@math.uu.se}

	\begin{abstract}
We compute the joint eigenvalue distribution for a 
 multiplicative non-Hermitian perturbation $(I+i\Gamma)H$, $\operatorname{rank}\,\Gamma=1$ of a
random matrix $H$ from the Gaussian, Laguerre, and chiral Gaussian $\beta$-ensembles.
	\end{abstract}
	
	\date{\today}
	\maketitle
	
	\section{Introduction}\label{ss:Intro}
	Let $H$ be an $n\times n$ random matrix from one of the 
	classical Hermitian ensembles of random matrices. 
	There is a vast amount of research in the mathematical and physical literature on additive non-Hermitian perturbations of $H$ of the form
	\begin{equation}\label{eq:additive}
	H+i\Gamma,
	\end{equation}
	where $\Gamma^*=\Gamma$, $\operatorname{rank}\,\Gamma \ll \operatorname{rank}\,H$.
	Such matrices have received considerable attention in recent years, particularly due to their applications in physics, see~\cite{fyo16,fyosav15,fyosom03,nucl} and references therein. In particular, the exact eigenvalue distribution of low rank non-Hermitian  perturbations of classical random matrix ensembles has been the subject of studies of \cite{AlpRos,fyokho99,Koz17,Koz20,SokZel,StoSeb,Ull} in particular; see also~\cite{ForrRev} for a nice review and~\cite{DubLas} for a related study of the dynamical problem.
			
	Instead of additive perturbations, O'Rourke--Wood in~\cite{OroWoo1} initiated a  study of {\it{multiplicative}} non-Hermitian perturbations
	\begin{equation}\label{eq:mult}
		(I_n+i\Gamma)H,
	\end{equation}
	where $I_n$ is the identity matrix, $\Gamma^*=\Gamma$, and $\operatorname{rank}\,\Gamma \ll \operatorname{rank}\,H$. 
 		
	The main result of this paper is the computation of the joint eigenvalue density for~\eqref{eq:mult} in the case where $\operatorname{rank}\,\Gamma=1$ and $H$ being from the Gaussian (orthogonal, unitary, or symplectic), Laguerre  (orthogonal, unitary, or symplectic), or chiral Gaussian (orthogonal, unitary, or symplectic) ensembles. 
	This is addressed in Sections~\ref{ss:GOE},~\ref{ss:LOE}, \ref{ss:GSE},~\ref{ss:chiral} below.
	
	More generally we compute the joint eigenvalue density for analogous multiplicative perturbations of the corresponding $\beta$-ensembles for any $\beta>0$, see Theorems~\ref{Gauss}, 
	\ref{thm:Lag1}, and \ref{laguerredifficult}. 

	For the proofs, we use the Dumitriu--Edelman tridiagonalization procedure ~\cite{Dumede02}, which allows us to employ the tools of orthogonal polynomials with random coefficients~\cite{AlpRos,KK,Koz17,Koz20}. 
	It is not clear to us if the explicit eigenvalue density for the additive~\eqref{eq:additive} or multiplicative~\eqref{eq:mult} perturbations in the case $\operatorname{rank}\,\Gamma\ge 2$ can be expressed in term of elementary functions. The orthogonal polynomial approach becomes inherently matrix-valued and technically taxing even in the case $\operatorname{rank}\,\Gamma = 2$. This still remains an interesting open question.
	
	A non-Hermitian random matrix of the form~\eqref{eq:additive} serves as a natural model for the energy Hamiltonian of an open quantum system. Mathematically, under suitable scaling, the eigenvalues of such matrices exhibit properties that are often referred to as ``weakly non-Hermitian'', as coined in~\cite{FKS,FKS2}. The asymptotic analysis of such matrices has been a popular topic in the mathematics and physics literature (see among many others, ~\cite{fyosav15,FyoSom96,FyoSom97,fyosom03,fyosomtit,nucl,OroWoo1} and references therein).

	
	
	
	
	\medskip
	
	\textbf{Acknowledgments}: The authors are grateful to the anonymous referees whose suggestions helped to improve the paper. Research of G. A. was supported by the BAGEP Award of the Science Academy.  The Research of G. A. was supported by the Vergstiftelsen Foundation during his postdoc at Uppsala University, where a significant portion of the work done.

	\section{Gaussian ensembles}
	\subsection{Gaussian $\beta$-ensemble and the spectral measure}\label{ss:GaussianSpectral}

	Given any $\beta>0$, we say that a tridiagonal (Jacobi) matrix
	\begin{equation}\label{genericJwithn}
		\calJ = \left(
		\begin{array}{ccccc}
			b_1&a_1&0& &\\
			a_1&b_2&a_2&\ddots &\\
			0&a_2&b_3&\ddots & 0 \\
			&\ddots&\ddots&\ddots & a_{n-1} \\
			& & 0 & a_{n-1} & b_n
		\end{array}\right), \quad a_j > 0, \quad b_n\in\bbR,
	\end{equation}
	belongs to the Gaussian $\beta$-ensemble  $\mathrm{G\beta E}_n$ if the joint distribution of the Jacobi coefficients $a_j$'s and $b_j$'s is proportional to 
	\begin{equation}\label{eq:GbetaCoefficients}
	 \prod_{j=1}^{n-1} a_j^{\beta(n-j)-1} \exp\left(- \sum_{j=1}^{n-1}a_j^2 - \tfrac12 \sum_{j=1}^n b_j^2 \right).
	\end{equation}
	Such matrices were introduced in~\cite{Dumede02} as the tridiagonal realization of the Gaussian orthogonal, unitary, and symplectic ensembles (see Sections~\ref{ss:GOE} and~\ref{ss:GSE} for further details). 
	They also computed the distribution of the spectral measure $\mu$, which is defined as the unique probability measure satisfying
	\begin{equation}\label{eq:spWithJ}
		\langle {e}_1, \calJ^k  {e}_1 \rangle = \int_\bbR x^k d\mu(x), \quad \mbox{for all } k\ge 0.
	\end{equation}
	It is a well-known fact that the spectral measure uniquely determines Jacobi coefficients and vice versa.
	For  $\mathrm{G\beta E}_n$,  the spectral measure is~\cite{Dumede02}
	\begin{equation}\label{spectralM}
		\mu(x) = \sum_{j=1}^n w_j \delta_{\lambda_j} , 
	\end{equation}
	with the joint distribution
	\begin{equation}\label{evGaussian}
		\tfrac{1}{g_{\beta,n}} \prod_{j=1}^n e^{-\lambda_j^2/2} \prod_{1\le j<k \le n} |\lambda_j - \lambda_k|^\beta d\lambda_1 \ldots d\lambda_n
		\times \tfrac{1}{c_{\beta,n}} \prod_{j=1}^n w_j^{\beta /2 -1 }dw_1 \ldots dw_{n-1},
	\end{equation}
	where
	\begin{align}
		\label{domain}
		& \sum_{j=1}^n w_j =1;  \quad w_j > 0, \quad 1 \le j \le n; \quad \lambda_j \in \bbR, \\
		\label{normalizations}
		& g_{\beta,n} = (2\pi)^{n/2} \prod_{j=1}^{n} \frac{\Gamma(1+\beta j/2)}{\Gamma(1+\beta/2)},
		\quad
		c_{\beta,n} = \frac{\Gamma(\beta/2)^n}{\Gamma(\beta n/2)}
	\end{align} 
and $\lambda_j$'s are the eigenvalues of the matrix $\calJ$ in \eqref{genericJwithn}.
		\subsection{Multiplicative perturbation of $\mathrm{G\beta E}_n$ }\label{ss:multGaus}
	
	The main object of study of the current paper is the multiplicative perturbation of $\calJ$ of the form
	\begin{equation}\label{genericJ2new}
		\calJ_{l,\times}:=(I_n+ile_1e_1^*)\mathcal{J}= 
		\left(
		\begin{array}{ccccc}
			b_1 +ilb_1&a_1+ila_1&0& &\\
			a_1&b_2&a_2&\ddots &\\
			0&a_2&b_3&\ddots & 0 \\
			&\ddots&\ddots&\ddots & a_{n-1} \\
			& & 0 & a_{n-1} & b_n
		\end{array}\right).
	\end{equation}
	
	Let us introduce the notation
	\begin{equation*}\label{arg.mod.pi}
		\Arg_{[0,\pi)} z :=
		\begin{cases}
			\Arg\,z\qquad&\text{if}\quad z\in\bbC_+,\\
			0\qquad&\text{if}\quad z\in\bbR,\\
			\pi+\Arg\,z\,\quad&\text{if}\quad z\in\bbC_-,\\
		\end{cases}
	\end{equation*}
	where $\Arg\,z \in (-\pi,\pi]$ is the principal value of the argument of $z$, $\bbC_+ = \{z\in\bbC:\Im\, z>0\}$, $\bbC_- = \{z\in\bbC:\Im \,z<0\}$. Geometrically $\Arg_{[0,\pi)} z \in [0,\pi)$ measures the angle between the radius-vector of $z$  and the positive  half-axis $\bbR_+$ if $z\in\bbC_+$ and  the negative half-axis
	$\bbR_-$ if $z\in\bbC_-$. Algebraically one can see that
	\begin{equation}\label{eq:cotan}
		\Arg_{[0,\pi)} z
		= \operatorname{arccot} \frac{\operatorname{Re} z}{\operatorname{Im} z} \in (0,\pi)
	\end{equation}
	for $z\in\bbC\setminus\bbR$.

	We are heading towards the proof of the following result. Its proof is given in Section~\ref{ss:Gaussian}.

	\begin{theorem}\label{Gauss}
		Let $\beta>0$, $\calJ$ belong to the Gaussian $\beta$-ensemble, 
		$l>0$ be independent of $\calJ$ with a given absolutely continuous distribution $d\nu (l)=F(l)\,dl$. 
		Then the eigenvalues $\{z_j\}_{j=1}^n$  of $\calJ_{l,\times}$ ~\eqref{genericJ2new} are distributed on 
		\begin{equation}\label{eq:configurations}
			\left\{ \{z_j\}_{j=1}^n \in (\bbC\setminus\bbR)^n: \tan\Big(\sum_{j=1}^n \Arg_{[0,\pi)} z_j \Big) \in \supp\,\nu
			\right\}.
		\end{equation}
		according to
		\begin{align}
			\frac{1}{C_{\beta,n}} & e^{-\frac{1}{2}((\sum_{j=1}^n \Re\, z_j)^2-\sum_{j\neq k} \Re(z_j z_k))}\nonumber\\
			&\times  \frac{1}{|\Re(\prod_{j=1}^n z_j)|^{\beta/2}}\prod_{j,k=1}^n |z_j-\bar{z_k}|^{\beta/2-1}\prod_{1\leq j<k\leq n}^n |z_j-{z_k}|^2\nonumber\\
			&\times  l^{1-\beta n/2} 
			F(l)
			 \, d^2 z_1\ldots d^2 z_n,\label{huge1}
		\end{align}
		where
		$l=\tan\Big(\sum_{j=1}^n \Arg_{[0,\pi)} z_j\Big)$ and $C_{\beta,n}=g_{\beta,n}c_{\beta,n} 2^{n(\beta/2-1)}$. 
	\end{theorem}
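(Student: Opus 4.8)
The plan is to realize the eigenvalues $\{z_j\}$ of $\calJ_{l,\times}$ as the image of the spectral data $(\{\lambda_j\},\{w_j\})$ together with $l$ under an explicit map, and then to push the joint law \eqref{evGaussian} times $F(l)\,dl$ forward through this map by a change of variables. First I would apply the matrix determinant lemma to $\calJ_{l,\times}-tI=(I_n+ile_1e_1^*)\calJ-tI$ to factor its characteristic polynomial. Setting $L_j(t)=\prod_{k\ne j}(t-\lambda_k)$, $P(t)=\prod_{j=1}^n(t-\lambda_j)$, $Q(t)=\sum_{j=1}^n w_jL_j(t)$, and $R(t)=P(t)-tQ(t)$ (all real, with $P$ monic of degree $n$ and $\deg R\le n-1$), I expect the identity
\[
\chi(t):=\prod_{j=1}^n(t-z_j)=P(t)+il\,R(t).
\]
Matching real and imaginary parts of the coefficients then inverts the construction: $P=\Re\chi$ recovers $\{\lambda_j\}$ as its roots, the residues of $Q/P$ recover $\{w_j\}$, and since $\prod_j z_j=\det\calJ_{l,\times}=(1+il)\prod_j\lambda_j$ one reads off $l=\Im(\prod z_j)/\Re(\prod z_j)=\tan\big(\sum_j\Arg_{[0,\pi)}z_j\big)$. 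The secular equation also forces $z_j\notin\bbR$, which pins down the configuration set \eqref{eq:configurations}; checking that this recovery lands in the admissible domain (reality and distinctness of the $\lambda_j$, positivity of the $w_j$, and $l>0$), i.e.\ identifying the image of the map, is a point that will need care.

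Next I would compute the Jacobian of $(\{\lambda_j\},\{w_j\},l)\mapsto\{z_j\}$ as a composition of three steps. Step (A), the map $(\lambda,w)\mapsto(\text{coeffs of }P,\text{coeffs of }Q)$, is block triangular since $P$ depends only on $\lambda$; using the Vandermonde Jacobian for $\lambda\mapsto P$ together with the identity $L_j-L_n=(\lambda_j-\lambda_n)\prod_{k\ne j,n}(t-\lambda_k)$ and the (inverse-Vandermonde) evaluation of the coefficient determinant of the Lagrange basis $\{\prod_{k\ne j,n}(t-\lambda_k)\}$ for $w\mapsto Q$, I expect the factor $\prod_{j<k}|\lambda_j-\lambda_k|^2$. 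Step (B), $(P,Q,l)\mapsto\text{coeffs of }\chi$, has a nearly diagonal derivative matrix whose determinant evaluates to $\pm\,l^{\,n-1}R(0)=\pm\,l^{\,n-1}\prod_j|\lambda_j|$. Step (C), coefficients-to-roots, contributes the complex Vandermonde $1/\prod_{j<k}|z_j-z_k|^2$.

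Finally I would re-express all $\lambda$- and $w$-symmetric functions appearing above in terms of $z$. The Gaussian weight matches via $\sum_j\lambda_j^2=(\sum_j\Re z_j)^2-\sum_{j\ne k}\Re(z_jz_k)$, which follows from $\sum_j z_j=\sum_j\lambda_j+ilb_1$ together with a direct computation of $\Re\,\mathrm{tr}(\calJ_{l,\times}^2)=\sum_j\lambda_j^2-l^2b_1^2$; likewise $\prod_j|\lambda_j|=|\Re\prod_j z_j|$. The crucial relation, which I expect to be the main obstacle, is
\[
\prod_{j,k=1}^n|z_j-\bar z_k|=(2l)^n\,\prod_j|\lambda_j|\,\prod_j w_j\,\prod_{i<k}(\lambda_i-\lambda_k)^2 .
\]
I would derive it from $\chi(\lambda_k)=-il\,\lambda_k w_k\prod_{i\ne k}(\lambda_k-\lambda_i)$ and the resultant-type evaluations $\prod_j\bar\chi(z_j)=(-2il)^n\prod_j R(z_j)$ and $\prod_j\chi(\bar z_j)=(2il)^n\,\overline{\prod_j R(z_j)}$, using $P(z_j)=-ilR(z_j)$ at each root. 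Substituting these identities, the powers of the $\lambda$-Vandermonde, of $\prod_j w_j$, of $\prod_j|\lambda_j|$, and of $l$ recombine precisely into \eqref{huge1}, with the stray factor $2^{n(\beta/2-1)}$ absorbed into $C_{\beta,n}=g_{\beta,n}c_{\beta,n}2^{n(\beta/2-1)}$. The two genuinely delicate points are this product identity for $\prod_{j,k}|z_j-\bar z_k|$ and the verification that the map is a bijection onto (almost all of) the configuration set \eqref{eq:configurations}.
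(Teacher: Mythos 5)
Your proposal is correct and is essentially the paper's own proof: you push the joint law \eqref{evGaussian} times $F(l)\,dl$ through the map $(\lambda,w,l)\mapsto\{z_j\}$, your factorization $\chi=P+ilR$ is Lemma~\ref{lem:charact} (derived via the matrix determinant lemma instead of row expansion), your three-step Jacobian computation reproduces Lemma~\ref{lm:Jacq} and Lemma~\ref{lem:Jacobian} (i.e.\ \eqref{Jaco1}), and your key product identity for $\prod_{j,k}|z_j-\bar z_k|$ is exactly \eqref{Jaco33}. The one ingredient you flag but do not establish --- that the map is a bijection onto the configuration set \eqref{eq:configurations} --- is precisely what the paper imports from \cite{KT} as Lemma~\ref{thm:HB}.
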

	\begin{remarks}
		1. For comparison, the joint distribution of rank-one {\it additive} perturbations $\calJ+ i l e_1 e_1^*$ is given by
		\begin{multline}\label{zFinal_random}
			\tfrac{1}{h_{\beta,n}} \, 
			e^{-\frac{1}{2}((\sum_{j=1}^n \Re\, z_j)^2-\sum_{j\neq k} \Re(z_j z_k))}
			 \prod_{j,k=1}^n |z_j-\bar{z}_k |^{\beta/2 -1} \prod_{j<k} |z_j-z_k|^2
			\\
			\times l^{1-{\beta n}/{2}} F(l) d^2 z_1\ldots  d^2 z_n,
		\end{multline}
		on
		\begin{equation*}
			\left\{ \{z_j\}_{j=1}^n \in (\bbC_+)^n: \sum_{j=1}^n \Im\, z_j  \in \supp\,\nu
			\right\},
		\end{equation*}
		where $l=\sum_{j=1}^n \Im\, z_j$, see  ~\cite[Thm 3]{Koz17} and ~\cite[Eq. (5.20)]{Koz17} for more details. The difference is the extra factor involving the product of $z_j$'s, another expression for $l$, and the supports of the distributions. See also the remark after Theorem~\ref{thm:Lag1} for comparison for the Laguerre case.
		
		2. By taking $F(l)dl$ to a Dirac delta at some $l_0>0$, one can then obtain the joint distribution of the eigenvalues for the case when $l$ is deterministic. In particular, the eigenvalues would be supported on the set where $\tan\big(\sum_{j=1}^n \Arg_{[0,\pi)} z_j \big) = l_0$.

		3. See Fig. 1 for an example of one realization of eigenvalues of $\calJ_{l,\times}$. Note that all the points are in the first and the third quadrants, and the sum of the angles between the radius-vector of eigenvalues and the $x$-axis (with its positive direction for the first quadrant and negative for the third quadrant)  is equal to $\arctan l =\pi/4$.
		
		4. When $n=1$, the problem in Theorem~\ref{Gauss} is reduced to the computation of the distribution of the complex random variable $z:=b(1+il)$, where $b$ is a  real normal variable $N(0,1)$ and $l$ has p.d.f. $F(l)$. Using \eqref{magni1}, the distribution ~\eqref{huge1} then simplifies (where $z=x+iy$) to
		$ e^{-x^2/2}  \frac{1}{|x|}
		F\left( \frac{|y|}{|x|}\right)\,dx\,dy$, up to a normalization.
	\end{remarks}
	
	\begin{figure}
		\centering
		\includegraphics[width=\linewidth]{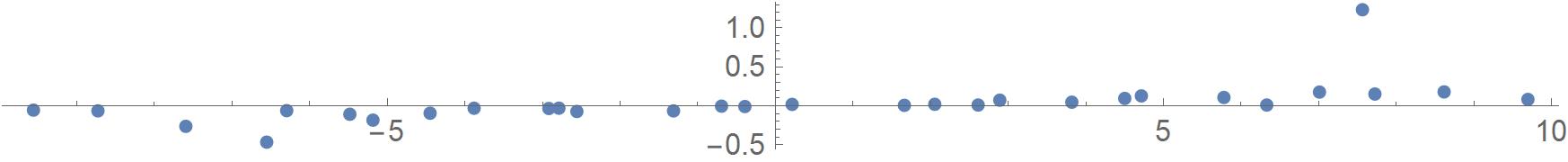}
		\captionsetup{labelformat=empty}
		\label{pic}\captionsetup{labelformat=empty}
		\caption{\tiny{Fig. 1. One realization of eigenvalues for $\calJ_{l,\times}$ ($n=30$, $\beta=2$, $l=1$) (via Wolfram Mathematica) }}
	\end{figure}

	\subsection{Eigenvalue configurations}
	
	Let us discuss the possible eigenvalue configurations of matrices of the form~\eqref{genericJ2new}.
	Let $\mathcal{J}^{(1)}$ be the Jacobi matrix obtained by deleting the first column and first row of $\mathcal{J}$, and let $\mathcal{J}^{(2)}$ be  the Jacobi matrix obtained by deleting the first two columns and first two rows of $\mathcal{J}$.
	
	\begin{lemma}\label{lem:charact}
		Let $p(z)$ and $q(z)$ be the characteristic polynomials of $\calJ$ and $\calJ^{(1)}$, respectively. Then 
		\begin{equation}\label{eq:HB}
			\det(zI_n -\calJ_{l,\times})=(1+il) p(z) - il z q(z).
		\end{equation}
	\end{lemma}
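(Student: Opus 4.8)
The plan is to compute $\det(zI_n - \calJ_{l,\times})$ by exploiting the fact that the multiplicative perturbation $(I_n + ile_1e_1^*)\calJ$ differs from $\calJ$ only in the first row: the top row of $\calJ_{l,\times}$ is $(1+il)$ times the top row of $\calJ$, while all other rows are unchanged. Equivalently, $zI_n - \calJ_{l,\times}$ agrees with $zI_n - \calJ$ except in the first row, where the entries have been modified. First I would expand the determinant of $zI_n - \calJ_{l,\times}$ along the first row. Because the matrix is (almost) tridiagonal, only the first two entries of the top row are nonzero, so the cofactor expansion has just two terms, each of which is a minor of a matrix that is tridiagonal below the first row.

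The key observation is that the two relevant cofactors are expressible through the characteristic polynomials of the submatrices. The $(1,1)$-cofactor is $\det(zI_{n-1} - \calJ^{(1)}) = q(z)$, since deleting the first row and column of $zI_n - \calJ$ leaves $zI_{n-1}-\calJ^{(1)}$. The $(1,2)$-cofactor, after striking the first row and second column, factors as $a_1$ times $\det(zI_{n-2}-\calJ^{(2)})$ because of the tridiagonal structure (the only nonzero entry in the first column of the remaining matrix is $-a_1$ in the top-left position). The cleanest way to organize this is to write both $p(z)$ and the perturbed determinant via the same cofactor expansion and subtract.

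Concretely, I would write
\begin{align}
p(z) &= \det(zI_n - \calJ) = (z - b_1)\,q(z) - a_1^2\,\det(zI_{n-2}-\calJ^{(2)}),\nonumber\\
\det(zI_n - \calJ_{l,\times}) &= (z - (1+il)b_1)\,q(z) - (1+il)a_1^2\,\det(zI_{n-2}-\calJ^{(2)}),\nonumber
\end{align}
where the second line uses that the perturbed top row has entries $(1+il)b_1$ and $(1+il)a_1$ (and the $(1,2)$-cofactor itself contributes another factor $(1+il)$ through the $a_1$ sitting in the modified first row). Eliminating the common term $a_1^2 \det(zI_{n-2}-\calJ^{(2)})$ between these two identities then expresses the perturbed characteristic polynomial as a linear combination of $p(z)$ and $zq(z)$.

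The arithmetic step is to combine the two displayed relations so as to cancel the $\det(zI_{n-2}-\calJ^{(2)})$ term: multiplying the first by $(1+il)$ and subtracting gives
\[
\det(zI_n - \calJ_{l,\times}) - (1+il)p(z) = \big[(1+il)b_1 - (1+il)(1+il)b_1\big]q(z)\cdots,
\]
and collecting terms yields $\det(zI_n-\calJ_{l,\times}) = (1+il)p(z) - il\,z\,q(z)$ after simplification. The main thing to get right is bookkeeping of the factor $(1+il)$: it multiplies both nonzero entries of the modified first row, so it appears once in the diagonal term $(z-(1+il)b_1)q(z)$ and once in the off-diagonal term. I expect the only real obstacle to be the careful tracking of signs and of where the extra $(1+il)$ factors land; there is no analytic difficulty, just the combinatorial care needed to verify that the two cofactor terms recombine exactly into the stated two-term formula.
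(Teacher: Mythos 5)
Your proposal is correct and follows essentially the same route as the paper: expand $\det(zI_n-\calJ_{l,\times})$ along the (only modified) first row, note that setting $l=0$ gives the three-term recurrence $p(z)=(z-b_1)q(z)-a_1^2\det(zI_{n-2}-\calJ^{(2)})$, and eliminate the common term $a_1^2\det(zI_{n-2}-\calJ^{(2)})$ between the two expansions. Your two displayed cofactor expansions are exactly right; only the intermediate bracket $\bigl[(1+il)b_1-(1+il)(1+il)b_1\bigr]q(z)$ is garbled (the $b_1$ terms in fact cancel, leaving $\bigl[z-(1+il)z\bigr]q(z)=-ilz\,q(z)$), but the stated conclusion is what correct simplification yields.
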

	\begin{proof}
		Let 
		$r(z)= \det(zI_{n-2}-\mathcal{J}^{(2)})$. 
		Expanding this determinant along the first row, we get
		\begin{equation}\label{largeeq}
			\det(zI_n -\calJ_{l,\times})= (z-b_1-ilb_1)q(z)-a_1^2r(z)-ila_1^2r(z)
		\end{equation}
		Taking $l=0$ we get
		$p(z)=(z-b_1)q(z)-a_1^2r(z)$ (this is the well-known three-term recurrence for monic orthogonal polynomials, see e.g., \cite{Sim11}). 
		This allows to express $r$ in terms of $p$ and $q$. Substituting it into \eqref{largeeq}, we get ~\eqref{eq:HB}.
	\end{proof}

	All possible eigenvalue configurations of matrices of the form $\calJ_{l,\times}$ (or $H_{l,\times}$, assuming $e_1$ is cyclic, see Section \ref{ss:GOE}) are described in Lemma~\ref{thm:HB} below. Roughly speaking~\cite{KT}, the effect of a rank 1 multiplicative perturbation is that  positive eigenvalues of $\calJ$ move into the interior of the first quadrant of $\bbC$, while negative eigenvalues move into the interior of the third quadrant. A zero eigenvalue of $\calJ$ leads to a zero eigenvalue of $\calJ_{l,\times}$ (this can be easily seen from ~\eqref{eq:HB}). This is why $\det\calJ=0$ becomes a special case. 
	
	For us, this will only be important when we study the Laguerre ensembles, for which $\calJ$ is either strictly positive definite or  positive semi-definite with one zero eigenvalue. 
	That is why in (c) we only consider the case $\calJ\ge 0$. If one removes this condition then~\eqref{eq:zeros3} would simply get $(\bbC_+)^{N-1}$ replaced by $(\bbC\setminus\bbR)^{N-1}$, but we do not need this case in the sequel.
	
	\begin{lemma}[{\cite[Cor. 5.1 and 5.2]{KT}}]\label{thm:HB}
		Fix $l>0$. Let $\mathcal{J}$ be an $N\times N$ Jacobi matrix as in \eqref{genericJwithn}. 
		\begin{itemize}
			\item [(a)] If $\det\mathcal{J}\ne 0$ then eigenvalues of $\calJ_{l,\times}$,~\eqref{genericJ2new}, belong to
			\begin{equation}\label{eq:zeros}
				\left\{ \{z_j\}_{j=1}^N \in (\bbC\setminus\bbR)^N: \sum_{j=1}^N \Arg_{[0,\pi)} z_j =\arctan l
				\right\}.
			\end{equation}
			Moreover, there is one-to-one correspondence between every configuration of points in~\eqref{eq:zeros} and a choice of $\calJ$ with $\det\mathcal{J}\ne 0$.
			
			The number of eigenvalues of $\calJ_{l,\times}$ in $\bbC_+$ (respectively, in $\bbC_-$) coincides with the number of positive (respectively, negative) eigenvalues of $\calJ$.
			\item[(b)] If $\mathcal{J}>0$ then eigenvalues of $\calJ_{l,\times}$,~\eqref{genericJ2new}, belong to
			\begin{equation}\label{eq:zeros2}
				\left\{ \{z_j\}_{j=1}^N \in (\bbC_+)^N: \sum_{j=1}^N \Arg\, z_j =\arctan l
				\right\}.
			\end{equation}
			Moreover, there is one-to-one correspondence between every configuration of points in~\eqref{eq:zeros2} and a choice of $\calJ$ with $\mathcal{J}>0$.
			
			\item[(c)] If $\mathcal{J}\ge 0$, 
			$\det \mathcal{J}=0$, then $\calJ_{l,\times}$,~\eqref{genericJ2new}, has a simple eigenvalue at $0$ and the remaining eigenvalues belong to
			\begin{equation}\label{eq:zeros3}
				\left\{ \{z_j\}_{j=1}^{N-1} \in (\bbC_+)^{N-1}: \sum_{j=1}^{N-1} \Arg\, z_j < \arctan l
				\right\}.
			\end{equation}
			Moreover, there is one-to-one correspondence between every configuration of points in \eqref{eq:zeros3} and a choice of $\calJ$ with $\mathcal{J}\ge 0$, $\det \mathcal{J}=0$. 
		\end{itemize}
	\end{lemma}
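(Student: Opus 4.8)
The plan is to locate the eigenvalues of $\calJ_{l,\times}$ from the factored characteristic polynomial~\eqref{eq:HB} and then to control them through the Cauchy transform of the spectral measure. Dividing~\eqref{eq:HB} by $p(z)$, an eigenvalue $z$ away from $\{\la_j\}$ satisfies
\begin{equation*}
z\,m(z)=\frac{1+il}{il},\qquad m(z):=\frac{q(z)}{p(z)}=\sum_{j=1}^N\frac{w_j}{z-\la_j},
\end{equation*}
where $\la_1,\dots,\la_N$ are the eigenvalues of $\calJ$ and $w_j>0$ are the spectral weights (the residues of $m$). Since $\sum_j w_j=1$, the right-hand side equals $1-i/l$, so this is equivalent to $\sum_j w_j\la_j/(z-\la_j)=-i/l$.

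First I would rule out real eigenvalues. For real $z\notin\{\la_j\}$ the last equation has a real left-hand side, contradicting the nonreal right-hand side; and $\det(\la_j I_N-\calJ_{l,\times})=-il\la_j q(\la_j)$, which is nonzero because the zeros of $q$ strictly interlace those of $p$ (so $q(\la_j)\neq0$) unless $\la_j=0$. Hence, if $\det\calJ\neq0$ all eigenvalues lie in $\bbC\setminus\bbR$, whereas if $\det\calJ=0$ the point $z=0$ is an eigenvalue, simple since $0$ is a simple zero of $p$; this produces the zero eigenvalue of~(c). To split the remaining eigenvalues between the half-planes I take imaginary parts: with $z=x+iy$ the identity $\Im(z\,m(z))=-1/l$ reads $y\sum_j w_j\la_j/|z-\la_j|^2=1/l>0$. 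If $\calJ>0$ then every $\la_j>0$, forcing $y>0$, so all eigenvalues lie in $\bbC_+$ (case~(b)). In general the half-plane is pinned by continuity: for small $l$ the eigenvalue near $\la_j$ is $z_j=\la_j+il\la_j w_j+O(l^2)$, so $\sgn\Im z_j=\sgn\la_j$; as $l$ increases the $z_j$ move continuously and, by the first step, never touch $\bbR$, so the counts in $\bbC_+$ and $\bbC_-$ remain equal to the numbers of positive and negative $\la_j$ (case~(a)).

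For the angle identity I would use the product of the roots of the monic polynomial in~\eqref{eq:HB}, namely $\prod_j z_j=(1+il)\det\calJ$. Writing $\Arg_{[0,\pi)}z=\Arg z+\pi\,\mathbf{1}_{\{z\in\bbC_-\}}$ and inserting the count of negative eigenvalues from the previous step, taking arguments yields $\sum_j\Arg_{[0,\pi)}z_j\equiv\arctan l\pmod{2\pi}$. The genuinely delicate point---and the main obstacle---is fixing the branch: I would show that $l\mapsto\sum_j\Arg_{[0,\pi)}z_j$ is continuous on $(0,\infty)$ (the eigenvalues never cross $\bbR$), tends to $0$ as $l\to0^+$ (each $z_j\to\la_j$ with $\Arg_{[0,\pi)}z_j\to0$ from either side), and is congruent to $\arctan l\in(0,\pi/2)$ modulo $2\pi$; the continuous lift starting at $0$ must therefore equal $\arctan l$ exactly, giving~(a) and~(b). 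Case~(c) follows by a limiting argument: replacing $\calJ$ by $\calJ+\eps I>0$ and letting $\eps\to0^+$, the eigenvalue collapsing to $0$ does so along a ray of strictly positive argument $\arctan(lw_1)+o(1)$, so the surviving angles sum to $\arctan l$ minus a positive quantity, which is the strict inequality in~\eqref{eq:zeros3}.

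Finally, the one-to-one correspondences are an inverse-spectral statement with an explicit inverse. Given a configuration, recover $l=\tan\sum_j\Arg_{[0,\pi)}z_j$, form $\Pi(z)=\prod_j(z-z_j)$, and write $\Pi=A+iB$ with $A,B$ real polynomials. Matching this against~\eqref{eq:HB}, which reads $\Pi=p+il(p-zq)$, gives $p=A$ and $q=(A-B/l)/z$, the constant terms cancelling precisely because of the angle constraint so that $q$ is a polynomial. It then remains to check that the pair $(p,q)$ has real, simple, strictly interlacing zeros with positive residues, hence is the spectral data of a genuine Jacobi matrix $\calJ$, and that the sign of $\det\calJ=\prod_j z_j/(1+il)$ places $\calJ$ in the correct class; I expect this last verification to be the only remaining bookkeeping.
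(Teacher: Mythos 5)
The paper never proves this lemma itself---it imports it wholesale from \cite{KT}---so your attempt has to be judged as a standalone proof, and its first half holds up well. Rewriting \eqref{eq:HB} as $\sum_j w_j\la_j/(z-\la_j)=-i/l$, excluding real eigenvalues (and getting simplicity of the zero eigenvalue when $\det\calJ=0$), reading off the half-plane of each eigenvalue from the imaginary part of that identity, freezing the counts by deforming $l\to0^+$, and pinning the branch of $\sum_j\Arg_{[0,\pi)}z_j\equiv\arctan l \pmod{2\pi}$ by continuity: all of this is correct and nicely self-contained. Two blemishes in case (c): the asymptotic $\arctan(lw_1)+o(1)$ for the collapsing eigenvalue is asserted rather than derived, and the constant is actually wrong---rescaling $z=\eps\zeta$ in the eigenvalue equation gives the limiting argument $\arctan\bigl(lw_0/(1+l^2(1-w_0))\bigr)$, where $w_0$ is the weight at $0$. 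Only strict positivity of that limit is needed, but establishing even that requires the rescaling computation you skipped.

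The genuine gap is in the ``one-to-one correspondence'' claims. Your explicit inverse ($p=A$, $q=(A-B/l)/z$, with the angle constraint making $q$ a polynomial) does yield injectivity, but surjectivity is precisely what you wave off as ``the only remaining bookkeeping'': that for an \emph{arbitrary} configuration in \eqref{eq:zeros} the real polynomial $A$ has $N$ simple, real, nonzero roots, that $(A-B/l)/z$ is a real polynomial whose roots strictly interlace them, and that the resulting residues are positive. This is not bookkeeping; it is a generalized Hermite--Biehler theorem, and it is the main theorem of the very reference \cite{KT} being quoted. In case (b), where all $z_j\in\bbC_+$, one can lean on classical Hermite--Biehler theory (every real pencil $A+tB$ has real zeros, and $A$ and $A-B/l=zq$ interlace), but in case (a) the configuration straddles both half-planes, and the classical argument---which rests on $|z-z_j|<|z-\bar z_j|$ for $z,z_j\in\bbC_+$---collapses; the constraint $\sum_j\Arg_{[0,\pi)}z_j=\arctan l$ must then be exploited in an essential way to force reality and interlacing, and nothing in your proposal does this. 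The omission matters for the paper, not just for completeness: the bijectivity is exactly what licenses the change of variables in Lemma \ref{lem:Jacobian} and in the proofs of Theorems \ref{Gauss}, \ref{thm:Lag1}, and \ref{laguerredifficult}.
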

	
	\subsection{Jacobians}
	For a function $f: {\bb R}^s\rightarrow  {\bb R}^t$, its Jacobian matrix will be denoted by 
	\begin{equation*}
		\frac{\partial(f_1, \ldots f_t)}{\partial(x_1,\ldots, x_s)}.
	\end{equation*}
	
	Recall the map from $\mu$ of the form~\eqref{spectralM} with~\eqref{domain} to $\calJ$ of the form~\eqref{genericJwithn} is one-to-one and onto. There is another one-to-one correspondence that is of interest to us: namely, given $p(z)$ and $q(z)$ with interlacing zeros there is a unique $\calJ$ (and therefore $\mu$) with $p$ and $q$ being the characteristic polynomials of $\calJ$ and $\calJ^{(1)}$~\cite{Wen}.  Notice that if $\lambda_j$'s are known then $p(z)=\prod (z-\lambda_j)$ is also determined and therefore we can consider the map from $w_j$'s to the coefficients of $q(z)$ as one-to-one. The following technical result from \cite{Koz17} will prove to be useful.
	\begin{lemma}\label{lm:Jacq}
		Let $q(z)= \sum_{j=0}^{n-1} q_j z^j$ with $q_{n-1}=1$. Treating $\lambda_j$ in~\eqref{spectralM} as constants,
		\begin{equation}\label{eq:Jacq}
			\left\lvert \det{\frac{\partial{(q_0,\ldots, q_{n-2})}}{\partial{(w_1, \ldots, w_{n-1})}}} \right\rvert= \prod_{1\leq j<k\leq n}  |\lambda_j-\lambda_k|.
		\end{equation}
	\end{lemma}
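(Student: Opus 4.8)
The plan is to first make the dependence of $q$ on the weights $(w_1,\dots,w_n)$ completely explicit, note that this dependence is linear so that the Jacobian is a constant matrix, and then reduce the constrained $(n-1)\times(n-1)$ determinant to an $n\times n$ determinant that can be evaluated against a Vandermonde matrix.

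First I would express $q$ in terms of the $w_j$ and $\lambda_j$. By Cramer's rule the $(1,1)$ entry of the resolvent is $\langle e_1,(zI_n-\calJ)^{-1}e_1\rangle = q(z)/p(z)$, since the relevant cofactor of $zI_n-\calJ$ is precisely $\det(zI_{n-1}-\calJ^{(1)})=q(z)$. On the other hand, \eqref{eq:spWithJ}--\eqref{spectralM} give $\langle e_1,(zI_n-\calJ)^{-1}e_1\rangle = \sum_{j=1}^n w_j/(z-\lambda_j)$. Comparing these and multiplying through by $p(z)=\prod_{k=1}^n(z-\lambda_k)$ yields
$$q(z)=\sum_{j=1}^n w_j\,\ell_j(z),\qquad \ell_j(z):=\prod_{k\ne j}(z-\lambda_k).$$
Thus each coefficient $q_i$ is an affine function of the $w_j$, the Jacobian is constant, and the leading coefficient is $\sum_j w_j=1$, consistent with the normalization $q_{n-1}=1$.

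Next I would compute an auxiliary $n\times n$ determinant. Let $M$ be the coefficient matrix $M_{ij}=[\ell_j]_i$ (row $i=0,\dots,n-1$ the degree, column $j=1,\dots,n$), and let $V$ be the Vandermonde matrix $V_{ki}=\lambda_k^i$. Since $\ell_j(\lambda_k)=\delta_{jk}\,p'(\lambda_j)$, one has $VM=\operatorname{diag}(p'(\lambda_1),\dots,p'(\lambda_n))$, hence $|\det V|\,|\det M|=\prod_j|p'(\lambda_j)|$. Using $|\det V|=\prod_{j<k}|\lambda_j-\lambda_k|$ together with $\prod_j|p'(\lambda_j)|=\prod_{j<k}|\lambda_j-\lambda_k|^2$ gives $|\det M|=\prod_{j<k}|\lambda_j-\lambda_k|$. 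It then remains to match this to the genuine Jacobian: substituting $w_n=1-\sum_{j<n}w_j$ into $q=\sum_j w_j\ell_j$ shows the Jacobian entries are $\pd q_i/\pd w_j=[\ell_j]_i-[\ell_n]_i$ for $i=0,\dots,n-2$, $j=1,\dots,n-1$. Starting from $M$ and subtracting its last column (the coefficients of $\ell_n$) from each of the first $n-1$ columns leaves $\det M$ unchanged; because every $\ell_j$ is monic of degree $n-1$, the top row $i=n-1$ of $M$ is all ones and becomes $(0,\dots,0,1)$ after the subtraction. Expanding along it identifies $\det M$ with the minor on rows $i=0,\dots,n-2$ and columns $1,\dots,n-1$, whose entries are exactly $[\ell_j]_i-[\ell_n]_i$, i.e. the Jacobian $J$. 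Hence $|\det J|=|\det M|=\prod_{j<k}|\lambda_j-\lambda_k|$.

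I expect the main obstacle to be the last step, namely handling the two constraints $\sum_j w_j=1$ and $q_{n-1}=1$ simultaneously while passing from $n$ to $n-1$ variables. The key point that makes the reduction clean is the monicity of the $\ell_j$: it forces the last row of $M$ to collapse to a single nonzero entry after the column operation, so that the constrained Jacobian inherits the Vandermonde evaluation of the unconstrained coefficient map. Everything else is a matter of bookkeeping with absolute values of the Vandermonde determinant and of $\prod_j p'(\lambda_j)$.
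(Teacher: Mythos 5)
Your proof is correct. It is also genuinely different from what the paper does: the paper's ``proof'' of Lemma~\ref{lm:Jacq} is not a computation at all but a citation --- it identifies $q_j$ with $-\Im\,\kappa_j/l$ in the notation of~\cite{Koz17} and invokes the Jacobian already computed there (their Eq.~(5.14)), stripping off the factor $l^{n-1}$. You instead derive the result from scratch: the resolvent identity $\langle e_1,(zI_n-\calJ)^{-1}e_1\rangle=q(z)/p(z)=\sum_{j}w_j/(z-\lambda_j)$ gives the Lagrange-basis representation $q=\sum_j w_j\ell_j$; the evaluation identity $VM=\operatorname{diag}\bigl(p'(\lambda_1),\dots,p'(\lambda_n)\bigr)$ against the Vandermonde matrix yields $|\det M|=\prod_{j<k}|\lambda_j-\lambda_k|$; and the affine constraint $w_n=1-\sum_{j<n}w_j$ is absorbed by a column operation that, thanks to monicity of the $\ell_j$, collapses the top row to $(0,\dots,0,1)$ and identifies $\det M$ with the constrained Jacobian (the cofactor sign $(-1)^{2n}=+1$ and the count $\prod_j|p'(\lambda_j)|=\prod_{j<k}|\lambda_j-\lambda_k|^2$ both check out). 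What your argument buys is a self-contained and transparent proof that, incidentally, makes explicit the same mechanism the paper only gestures at in the remark following the lemma (the Dixon--Anderson derivation uses precisely $m(x)=-q(x)/p(x)$ with $q=\sum_j w_j\ell_j$); what the paper's citation buys is brevity and notational consistency with~\cite{Koz17}, whose computations are reused again in Lemma~\ref{lem:Jacobian} and Lemma~\ref{lm:Jacq2}.
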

	\begin{proof}
		This is shown in the proof of~\cite[Lemma 6]{Koz17}. Indeed, our $q(z)$ is in their notation $-\frac1l \sum_{k=0}^{n-1} (\Im\,\kappa_j) z^j$, that is, our $q_j$ is their $-\Im\,\kappa_j/l$ for each $j=0,\ldots,n-2$, where $l$ is some given constant. Therefore our Jacobian coincides with \cite[Eq.(5.14)]{Koz17} but without  the $l^{n-1}$ factor which is equal to the right-hand side of~\eqref{eq:Jacq}. 
	\end{proof}
	\begin{remark}
		The Jacobian~\eqref{eq:Jacq} has  been implicitly present in many other computations. First, note that it is trivially equivalent to 
		\begin{equation}\label{eq:newJac}
			\left\lvert \det{\frac{\partial{(\mu_0,\ldots, \mu_{n-2})}}{\partial{(w_1, \ldots, w_{n-1})}}} \right\rvert= \frac{\prod_{1\leq j<k\leq n} |\lambda_j-\lambda_k|}{\prod_{1\leq j<k\leq n-1} |\mu_j-\mu_k|} ,
		\end{equation}
		where $\mu_j$'s are the zeros of $q(z)$. This is exactly the same Jacobian (see~\cite[Eq~(4.14)]{Forrester-book}, where it is presented in a slightly more convoluted way) that appears in the Dixon--Anderson derivation~\cite{And,Dix} of the Selberg integral. Let us briefly summarize the idea behind the use of \eqref{eq:newJac}.	
		Given a sequence of reals  $\{\lambda_j\}_{j=1}^n$, we form the  random rational function $m(x):=\sum_{j=1}^n \frac{w_j}{\lambda_j - x}$ with the joint distribution of $w_j$'s on the simplex $w_j>0$, $\sum_{j=1}^n w_j = 1$ being proportional to
		$$
		\prod_{j=1}^n w_j^{s_j-1}, 
		$$
		for some constants $s_j>0$. Representing $m(x) = - {q(x)}/{p(x)}$, we can see that the roots of $m$ coincide with the roots of $q(x) = \prod_{j=1}^{n-1} (x-\mu_j)$, whose joint distribution is then easily computable using~\eqref{eq:newJac}. 
		The new distribution is a probability measure and hence has a total mass 1. This produces an identity which is just one step away from the recurrence formula for the Selberg integral, see~\cite[Sect~4.2]{Forrester-book} for further details. 
	\end{remark}
	
	In the next result, let us assume $l>0$ to be a random variable independent of $\mu$ (i.e., of $\calJ$). Then (see Lemma~\ref{thm:HB}(a), (b)) the mapping from $\{\mu,l\}$ to $\{z_j\}_{j=1}^n$ (eigenvalues of $\calJ_{l,\times}$) is one-to-one on the set $\prod_{j=1}^n \lambda_j \ne 0$. In the next lemma, we compute the Jacobian of this map.
	\begin{lemma}\label{lem:Jacobian}
		Let $\mathcal{J}$ be an $n\times n$ Jacobi matrix as in \eqref{genericJwithn} with $\det\calJ\ne0$ and the spectral measure given by \eqref{spectralM} with \eqref{domain}. Denote $\{z_j\}_{j=1}^n$ to be the eigenvalues of $\calJ_{l,\times}$,~\eqref{genericJ2new}, with $l>0$.  
		Then
		\begin{equation}\label{Jaco1}
			\left\lvert \det{\frac{\partial{(\Re\, z_1,\Im\,z_1,\ldots, \Re\,z_{n},\Im\,z_n)}}{\partial{(\lambda_1,\ldots, \lambda_{n},w_1, \ldots, w_{n-1},l)}}} \right\rvert= l^{n-1}\prod_{1\leq j<k\leq n}  \frac{|\lambda_j-\lambda_k|^2}{|z_j-z_k|^2} \Big\lvert\prod_{j=1}^n \lambda_j\Big\rvert.
		\end{equation}
	\end{lemma}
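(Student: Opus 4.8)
The plan is to factor the map $(\lambda_1,\dots,\lambda_n,w_1,\dots,w_{n-1},l)\mapsto(\Re z_1,\Im z_1,\dots,\Re z_n,\Im z_n)$ through the coefficients of the characteristic polynomial of $\calJ_{l,\times}$ and then apply the chain rule for Jacobians. By Lemma~\ref{lem:charact}, $P(z):=\det(zI_n-\calJ_{l,\times})=(1+il)p(z)-ilzq(z)$; since $p$ is monic of degree $n$ and $q$ monic of degree $n-1$, one checks that the $z^n$-coefficients cancel to give a monic $P$ of degree $n$. Writing $P(z)=z^n+\sum_{m=0}^{n-1}c_mz^m$, the $n$ complex coefficients $c_0,\dots,c_{n-1}$ supply $2n$ real intermediate coordinates matching the $2n$ real source coordinates, and the $z_j$ are precisely the roots of $P$.

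The cleanest observation, which drives everything, is that with $p(z)=\sum_m p_mz^m$ and $q(z)=\sum_m q_mz^m$ (setting $q_{-1}:=0$) the coefficients of $P$ are $c_m=(1+il)p_m-ilq_{m-1}=p_m+il(p_m-q_{m-1})$, so $\Re c_m=p_m$ and $\Im c_m=l(p_m-q_{m-1})$. This separates the dependence: the real parts $\Re c_m$ are functions of the $\lambda$'s alone, while the imaginary parts carry all the $w$- and $l$-dependence.

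For the coefficients-to-roots factor I would use that the roots-to-coefficients map $(z_1,\dots,z_n)\mapsto(c_0,\dots,c_{n-1})$ is holomorphic with complex Jacobian determinant $\pm\prod_{j<k}(z_j-z_k)$ (a Vandermonde determinant), so its real Jacobian has modulus $\prod_{j<k}|z_j-z_k|^2$; inverting gives $\bigl|\det\partial(\Re z,\Im z)/\partial(\Re c,\Im c)\bigr|=\prod_{j<k}|z_j-z_k|^{-2}$. For the remaining factor $\partial(\Re c,\Im c)/\partial(\lambda,w,l)$ I would exploit the block structure above: ordering the target as $(\Re c_0,\dots,\Re c_{n-1},\Im c_0,\dots,\Im c_{n-1})$, the $\Re c_m$-rows vanish in the $w$- and $l$-columns, so the determinant is block-triangular and equals $\det A\cdot\det E$, where $A=(\partial p_m/\partial\lambda_i)$ is the $\lambda$-coefficient Vandermonde with $|\det A|=\prod_{j<k}|\lambda_j-\lambda_k|$, and $E$ is the $n\times n$ block of the $\Im c_m$-rows against the $w$- and $l$-columns.

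Finally, $E$ is evaluated by expanding along its first row: since $\Im c_0=lp_0$ does not depend on any $w_j$, that row has the single nonzero entry $\partial\Im c_0/\partial l=p_0$, and expansion leaves the minor $(-l\,\partial q_{m-1}/\partial w_j)_{m,j=1}^{n-1}$. Pulling the factor $-l$ out of each of its $n-1$ rows and invoking Lemma~\ref{lm:Jacq} yields $|\det E|=|p_0|\,l^{n-1}\prod_{j<k}|\lambda_j-\lambda_k|$, with $|p_0|=\bigl|\prod_j\lambda_j\bigr|$. Multiplying the three pieces gives $l^{n-1}\bigl|\prod_j\lambda_j\bigr|\prod_{j<k}|\lambda_j-\lambda_k|^2\big/\prod_{j<k}|z_j-z_k|^2$, which is exactly \eqref{Jaco1}. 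The step requiring the most care is the bookkeeping of the block reduction — matching the surviving minor precisely to the Jacobian of Lemma~\ref{lm:Jacq} and correctly tracking the $q_{-1}=0$ first row — together with verifying that all maps involved are local diffeomorphisms on the generic set where $\det\calJ\ne0$ and the $z_j$ (equivalently the $\lambda_j$) are distinct, so that the chain rule applies and the Vandermonde factors do not vanish.
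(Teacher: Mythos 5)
Your proposal is correct and follows essentially the same route as the paper's own proof: both factor the map through the coefficients of $\det(zI_n-\calJ_{l,\times})$, use the Vandermonde Jacobian for the roots--coefficients step, exploit the block-triangular structure coming from $\Re\,\kappa_m=p_m$ depending only on the $\lambda_j$'s, isolate the $l$-column via $\Im\,\kappa_0=l\,p_0$, and finish with Lemma~\ref{lm:Jacq}. Your cofactor expansion of the block $E$ along its first row is just a slightly more explicit rendering of the paper's second block reduction, so there is nothing substantive to change.
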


		\begin{proof}
			Denote  $\kappa_j$'s to be the coefficients of the characteristic polynomial of $\calJ_{l,\times}$:
			\begin{equation}
				\det(z I_n-\calJ_{l,\times})=\prod_{j=1}^n (z-z_j)= \sum_{j=0}^n \kappa_j z^j
			\end{equation}
			with $\kappa_n=1$.
			First of all, it is standard (see, e.g.,~\cite[Appendix D]{KK}) that
			\begin{equation}\label{vande0}
				\left\lvert \det{\frac{\partial{(\Re\,\kappa_0,\ldots,\Re\,\kappa_{n-1},\Im\,\kappa_{0},\ldots,\Im\,\kappa_{n-1})}}{\partial{(\Re\, z_1,\ldots,\Re\, z_{n},\Im\, z_{1},\ldots,\Im\, z_{n})}}}\right\rvert= \prod_{1\leq j<k\leq n}|z_j-z_k|^2.
			\end{equation}	
			Now we compute the Jacobian of the transformation from $\kappa_j$'s to $\lambda_j$'s, $w_j$'s, and $l$.
			
			Recall \eqref{eq:HB}:
			\begin{equation}\label{rew10}
				\prod_{j=1}^n(z-z_j)=\sum_{j=0}^n \kappa_j z^j=p(z)+ilp(z)-ilzq(z).
			\end{equation}
			It follows that
			\begin{align}
				& \label{Rekj0}
				\sum_{j=0}^{n}(\Re\, \kappa_j)z^j=\prod_{j=1}^n (z-\lambda_j)= p(z), \\
				& \label{rew20}
				\sum_{j=0}^{n-1} (\Im \,\kappa_j) z^j=lp(z)-lzq(z).
			\end{align}
			Then~\eqref{Rekj0} gives us (see, e.g.,~\cite[Appendix D]{KK}) 
			\begin{equation}\label{Jacob110}
				\left\lvert \det{\frac{\partial{(\Re\,\kappa_0,\ldots, \Re\,\kappa_{n-1})}}{\partial{(\lambda_1,\ldots, \lambda_{n})}}} \right\rvert= \prod_{1\leq j<k\leq n}  |\lambda_j-\lambda_k|. 
			\end{equation}
			Note that $\Re \,\kappa_j$'s do not depend on $w_j$'s or $l$ in \eqref{Rekj0}. Thus
			\begin{equation}\label{0mat10}
				{\frac{\partial{(\Re\,\kappa_0,\ldots, \Re\,\kappa_{n-1})}}{\partial{(w_1,\ldots, w_{n-1},l)} }}=[0]_{n\times n}.
			\end{equation}
			Substituting $z=0$ into \eqref{rew20}, we see that $\Im\, \kappa_0=l \Re\, \kappa_0 = l (-1)^n \prod_{j=1}^n \lambda_j$ and thus
			\begin{align}
				&\frac{\partial{\Im \,\kappa_0}}{\partial{w_j}}=0,\,\,\, \mbox{for } j=1,\ldots, n-1,\label{part1eq0}\\
				&\left\lvert\frac{\partial{\Im \,\kappa_0}}{\partial{l}}\right\rvert =|\Re\, \kappa_0|=\Big\lvert\prod_{j=1}^n \lambda_j\Big\rvert.\label{part2eq0}
			\end{align}
			Since the Jacobian in \eqref{vande0} can be written in block form and one block vanishes in view of \eqref{0mat10}, we are left with computing
			\begin{equation}\label{Jacobdet110}
				\left\lvert \det{\frac{\partial{(\Im\,\kappa_1,\ldots, \Im\,\kappa_{n-1})}}{\partial{(w_1,\ldots, w_{n-1})}}} \right\rvert,
			\end{equation}
			while treating $\lambda_j$'s and $l$ as constant. Let $q(z)= \sum_{j=0}^{n-1} q_j z^j$ with $q_{n-1}=1$ as in Lemma~\ref{lm:Jacq}.
			
			Note that \eqref{rew20} shows that  $\Im\,\kappa_j$ for $1\le j \le n-1$ is equal to $-l q_{j-1}$ plus terms independent of $w_j$'s. Using Lemma~\ref{lm:Jacq} we obtain that~\eqref{Jacobdet110} is equal to
			$$
			l^{n-1} \prod_{1\leq j<k\leq n}  |\lambda_j-\lambda_k|.
			$$
			
			Combining this with
			\eqref{vande0}, \eqref{Jacob110}, \eqref{0mat10}, \eqref{part1eq0}, \eqref{part2eq0}, we obtain~\eqref{Jaco1}.	
									\end{proof}
									
									\subsection{Proof of Theorem~\ref{Gauss}}\label{ss:Gaussian}
									\begin{proof}[Proof of Theorem~\ref{Gauss}]
										
												By \eqref{evGaussian} the joint density of ${w_1},\ldots,w_{n-1},\lambda_1,\ldots \lambda_n,l$ is
												\begin{equation}\label{Jaco27}
													\frac{F(l)}{g_{\beta,n}c_{\beta,n}}\prod_{j=1}^n e^{-\lambda_j^2/2}\prod_{j<k}  |\lambda_j-\lambda_k|^\beta \prod_{j=1}^n  w_j^{\beta/2-1}.
												\end{equation}
												All $\lambda_j$'s are distinct since they are eigenvalues of a Jacobi matrix $\calJ$. For the rest of the proof, we work conditionally on the event that $\det\calJ \ne 0$ which happens with probability 1. 
												By Lemma~\ref{thm:HB}(a), there is a bijective map from $\mu$ and $l$ to $\{z_j\}_{j=1}^n$ in~\eqref{eq:zeros}. Its  Jacobian is given in \eqref{Jaco1}.
												This implies that the induced  joint density of $z_1,\ldots,z_n$ is
												\begin{equation}\label{Jaco28}
													\frac{l^{1-n}F(l)}{\left\lvert\prod_{j=1}^n \lambda_j\right\rvert g_{\beta,n}c_{\beta,n}} \prod_{1\leq j<k\leq n}|z_j-z_k|^2\prod_{j=1}^n e^{-\lambda_j^2/2}\prod_{j<k}  |\lambda_j-\lambda_k|^{\beta-2} \prod_{j=1}^n  w_j^{\beta/2-1}.
												\end{equation}
												We now need to express \eqref{Jaco28} in terms of $z_j$'s.
												The equalities
												\begin{align}
													&l=\frac{\Im(\prod_{j=1}^n z_j)}{{\Re(\prod_{j=1}^n z_j)}} = 
													\tan\Big(\sum_{j=1}^n \Arg_{[0,\pi)} z_j\Big)
													,\label{magni1}\\
													&\Big\vert \Re \prod_{j=1}^n z_j \Big\vert =	\Big\vert \prod_{j=1}^n \lambda_j \Big\vert \label{magni2}
												\end{align}
												follow from \eqref{rew10} at $z=0$ (the right-hand side of~\eqref{magni1} is a consequence of \eqref{eq:cotan} and~\eqref{eq:zeros}).
												Using the coefficients of the terms $z^{n-1}$ and $z^{n-2}$ in ~\eqref{Rekj0}, we get
												\begin{equation}\label{Jaco29}
													\sum_{j=1}^n \lambda_j^2=\left(\sum_{j=1}^n \lambda_j\right)^2-\sum_{j\neq k} (\lambda_j \lambda_k)=\left(\sum_{j=1}^n \Re\, z_j\right)^2-\sum_{j\neq k} \Re(z_j z_k).
												\end{equation}
												Dividing \eqref{rew10} by $p(z) = \prod_{j=1}^n (z-\lambda_j)$ we get
												\begin{equation}\label{Jaco30}
													1+il+ilz\sum_{j=1}^n \frac{w_j}{\lambda_j-z}=\frac{\prod_{k=1}^n (z-z_k)}{\prod_{k=1}^n (z-\lambda_k)}.
												\end{equation}
												Taking residues at $z=\lambda_j$ we get
												\begin{equation}\label{Jaco31}
													-il\lambda_j w_j=\frac{\prod_{k=1}^n (\lambda_j-z_k)}{\prod_{k\neq j} (\lambda_j-\lambda_k)}.
												\end{equation}
												Note that 
												\begin{equation}\label{Jaco32}
													\frac{1}{2}\prod_{j=1}^n (z-z_j)+\frac{1}{2}\prod_{j=1}^n (z-\bar{z_j})=\sum_{j=0}^{n}(\Re\, \kappa_j)z^j=\prod_{j=1}^n (z-\lambda_j).
												\end{equation}
												If we take $z=z_k$, $k=1,\ldots,n$ in \eqref{Jaco32}, substitute these equations into \eqref{Jaco31}, and use \eqref{magni2}, we get
												\begin{equation}\label{Jaco33}
													\prod_{j=1}^n w_j=	\frac{\prod_{j,k} |z_j-\bar{z_k}|}{(2l)^n \prod_{j<k}  |\lambda_j-\lambda_k|^{2}|\Re(\prod_{j=1}^n z_j)|}.
												\end{equation}
												Finally, combining \eqref{Jaco33}, \eqref{Jaco29}, \eqref{magni2} into \eqref{Jaco28}, we arrive at 
												\begin{align*}
													\frac{F(l) l^{1-\beta n/2}}{g_{\beta,n}c_{\beta,n} 2^{n(\beta/2-1)}} & e^{-\frac{1}{2}((\sum_{j=1}^n \Re\, z_j)^2-\sum_{j\neq k} \Re(z_j z_k))}\nonumber\\
													& \times  \frac{1}{|\Re(\prod_{j=1}^n z_j)|^{\beta/2}}\prod_{j,k=1}^n |z_j-\bar{z_k}|^{\beta/2-1}\prod_{1\leq j<k\leq n}^n |z_j-{z_k}|^2 d^2 z_1\cdots d^2 z_n,\label{hugeNEW}
												\end{align*}
												where $l$ is given by \eqref{magni1}. 
												This proves \eqref{huge1}.
											\end{proof}

\section{Laguerre $\beta$-ensembles}
\subsection{Laguerre $\beta$-ensemble and the spectral measure}
In the same paper~\cite{Dumede02}, Dumitriu--Edelman introduced the Laguerre $\beta$--ensembles $\mathrm{L}\beta \mathrm{E}_{(m,n)}$ as the tridiagonalization of the Laguerre orthogonal/unitary ensembles, subsequently\\ $\beta$-extrapolated (see also Section~\ref{ss:GOE} below). 

For $m\ge n$, an element of $\mathrm{L}\beta \mathrm{E}_{(m,n)}$ are defined as the $n\times n$ Jacobi matrix $\calJ= B^* B$, where
\begin{equation}\label{bidiag}
	B = \left(
	\begin{array}{ccccc}
		x_1&y_1&0& &\\
		0&x_2&y_2&\ddots &\\
		0&0&x_3&\ddots & 0 \\
		&\ddots&\ddots&\ddots & y_{n-1} \\
		& & 0 & 0 & x_n
	\end{array}\right),
\end{equation}
with
\begin{equation}\label{eq:LagCoeff}
\begin{aligned}
	x_j & \sim \chi_{\beta(m-j+1)},  & 1\le j\le n, \\
	y_j & \sim \chi_{\beta(n-j)},  & 1\le j\le n-1,
\end{aligned}
\end{equation}
where $\chi_k$ is the chi distribution with $k$ degrees of freedom, and $\beta>0$ is arbitrary. 

For $m \le  n-1$, we define (see~\cite{Koz17}) an element of $\mathrm{L}\beta \mathrm{E}_{(m,n)}$  as the $(m+1)\times (m+1)$ Jacobi matrix  $\calJ= B^* B$ of rank $m$, where
\begin{equation}\label{bidiag2}
	B = \left(
	\begin{array}{cccccc}
		x_1&y_1&0& &\\
		0&x_2&y_2&\ddots &\\
		0&0&x_3&\ddots & 0 \\
			&\ddots&\ddots&\ddots&y_{m-1}& 0\\
		&&&0&x_{m} & y_{m} \\
		&& &  & 0 & 0
	\end{array}\right),
\end{equation}
with
\begin{equation}\label{eq:LagCoeff2}
	\begin{aligned}
	x_j & \sim \chi_{\beta(m-j+1)},  & 1\le j\le m, \\
	y_j & \sim \chi_{\beta(n-j)},  & 1\le j\le m.
\end{aligned}
\end{equation}

Let us now describe their spectral measures~\eqref{eq:spWithJ}.  Let $a=|m-n|+1-2/\beta$.

For $m\ge n$, $\mu$ has the form~\eqref{spectralM}, ~\eqref{domain}, with the joint probability distribution~\cite{Dumede02}
\begin{equation}\label{eq:Laguerre1}
	\frac1{s_{\beta,n,m}}\prod_{j=1}^n \left(\lambda_j^{\beta a/2} e^{-\lambda_j/2}\right)\prod_{j<k}  |\lambda_j-\lambda_k|^\beta \prod_{j=1}^n  w_j^{\beta/2-1},
\end{equation}
where
\begin{equation}
	s_{\beta,n,m}= \frac{2^{n(a\beta/2+1+(n-1)\beta/2)}\Gamma(\beta/2)^n}{\Gamma(\beta n/2)}\prod_{j=1}^n \frac{\Gamma(1+\beta j/2)\Gamma(1+\beta a/2+ \beta(j-1)/2)}{\Gamma(1+\beta/2)}.
\end{equation}

For $m\le n-1$,  $\mu$ takes the form (\cite[Prop. 1]{Koz17})
\begin{align}\label{spectralMwith0}
	&\mu(x) = w_0 \delta_{0}+\sum_{j=1}^m w_j \delta_{\lambda_j} , 
	\\
	\label{domainwith0}
		& \sum_{j=0}^m w_j =1;  \quad w_j > 0, \quad 0 \le j \le m; \quad \lambda_j \in \bbR,
\end{align}
with the joint probability distribution
	\begin{align}\label{eq:LagJointHard}
	\frac{w_0^{\beta(n-m)/2-1}}{t_{\beta,n,m}}\prod_{j=1}^m \left(\lambda_j^{\beta a/2} e^{-\lambda_j/2}\right)\prod_{j<k}  |\lambda_j-\lambda_k|^\beta \prod_{j=1}^m  w_j^{\beta/2-1},
\end{align}
where
\begin{multline}
	t_{\beta,n,m}= \frac{2^{m(a\beta/2+1+(m-1)\beta/2)}\Gamma(\beta/2)^m}{\Gamma(\beta n/2)}\prod_{j=1}^m \frac{\Gamma(1+\beta j/2)\Gamma(1+\beta a/2+ \beta(j-1)/2)}{\Gamma(1+\beta/2)}\\
	\times \Gamma(\beta(n-m)/2).
\end{multline}

 Observe that, $\calJ\in\mathrm{L}\beta\mathrm{E}_{(m,n)}$ is a square matrix of order  $N=\min\{n,m+1\}$.

We want to compute the joint distribution of the eigenvalues of 
\begin{equation}
	\calJ_{l,\times}:=(I_N+ile_1e_1^*)\mathcal{J},
\end{equation}
where $\calJ$ is from $\mathrm{L}\beta\mathrm{E}_{(m,n)}$.

All possible eigenvalue configurations of $\calJ_{l,\times}$ are given by Lemma~\ref{thm:HB}(b) for the case $m \ge n$ and  by Lemma~\ref{thm:HB}(c) for the case $m \le n-1$. In the next two sections we compute their distribution.


\subsection{Eigenvalue density for multiplicative perturbations: $m\geq n$ case.}\label{ss:Laguerre1}

\begin{theorem}\label{thm:Lag1}
Suppose  $\beta>0$, $m\geq n$, $\calJ$ belongs to the Laguerre $\beta$-ensemble $\mathrm{L}\beta\mathrm{E}_{(m,n)}$, and  $l>0$ 
is independent of $\calJ$ with a given absolutely continuous distribution $d\nu(l)=F(l)dl$.
Then the eigenvalues $\{z_j\}_{j=1}^n$  of $\calJ_{l,\times}$ ~\eqref{genericJ2new} are distributed on 
\begin{equation}\label{eq:zerosLag1}
	\left\{ \{z_j\}_{j=1}^n \in (\bbC_+)^n: \tan\Big(\sum_{j=1}^n \Arg  z_j \Big) \in \supp\,\nu
	\right\}.
\end{equation}
according to
\begin{align}
	\frac{1}{C_{\beta,n,m}} &e^{-\sum_{j=1}^n \Re\, z_j/2} \Big\vert\Re\big(\prod_{j=1}^n z_j\big)\Big\vert^{a\beta/2-\beta/2}\prod_{j,k=1}^n |z_j-\bar{z}_k|^{\beta/2-1}\prod_{1\leq j<k\leq n} |z_j-{z_k}|^2\nonumber\\
	&\times \frac{F(l)}{l^{\tfrac{\beta n}{2}-1}} \,  d^2 z_1\ldots d^2 z_n,\label{huge11}
\end{align}
	where $l=\tan\Big(\sum_{j=1}^n \Arg z_j\Big)$
	and $C_{\beta,n,m} = s_{\beta,n,m} 2^{n(\beta/2-1)}$. 
\end{theorem}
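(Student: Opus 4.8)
The plan is to follow the same strategy as the proof of Theorem~\ref{Gauss}, transporting the known joint density of the Laguerre spectral data through the change of variables to the eigenvalues $\{z_j\}$ of $\calJ_{l,\times}$. Concretely, I start from the joint density~\eqref{eq:Laguerre1} of $(\lambda_1,\ldots,\lambda_n,w_1,\ldots,w_{n-1})$ together with the independent factor $F(l)$, giving the joint density of all $2n$ real parameters $(\lambda_j,w_j,l)$. Since $m\ge n$ forces $\calJ=B^*B>0$ almost surely (all $\lambda_j>0$), we are in the situation of Lemma~\ref{thm:HB}(b), so the map from $(\mu,l)$ to $\{z_j\}_{j=1}^n$ is a bijection onto the configuration set~\eqref{eq:zerosLag1}, and I apply the Jacobian~\eqref{Jaco1} from Lemma~\ref{lem:Jacobian} verbatim (its proof never used Gaussianity). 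This converts the parameter density into a density in the $z_j$'s carrying the factor $l^{1-n}\prod_{j<k}\frac{|\lambda_j-\lambda_k|^2}{|z_j-z_k|^2}|\prod_j\lambda_j|^{-1}$.

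Next I would rewrite every remaining $\lambda$- and $w$-dependent quantity in terms of the $z_j$'s, reusing the algebraic identities already established in Section~\ref{ss:Gaussian}, since Lemma~\ref{lem:charact} and~\eqref{eq:HB} hold for any Jacobi matrix. In particular~\eqref{magni1} gives $l=\tan(\sum_j\Arg z_j)$ (here genuinely $\Arg$, not $\Arg_{[0,\pi)}$, because all $z_j\in\bbC_+$ by positivity of $\calJ$), and~\eqref{magni2} gives $|\Re\prod_j z_j|=|\prod_j\lambda_j|=\prod_j\lambda_j$. The key substitution is again~\eqref{Jaco33}, namely $\prod_j w_j=\frac{\prod_{j,k}|z_j-\bar z_k|}{(2l)^n\prod_{j<k}|\lambda_j-\lambda_k|^2\,|\Re\prod_j z_j|}$, which is purely a consequence of~\eqref{eq:HB} and hence transfers unchanged. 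The one genuinely new feature is the confining/weight factor: the Gaussian $\prod_j e^{-\lambda_j^2/2}$ is replaced by $\prod_j\lambda_j^{\beta a/2}e^{-\lambda_j/2}$, so I must re-express $\sum_j\lambda_j$ and $\prod_j\lambda_j^{\beta a/2}$ in terms of $z_j$'s instead of $\sum_j\lambda_j^2$.

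For the linear exponent, reading off the coefficient of $z^{n-1}$ in~\eqref{Rekj0} gives $\sum_j\lambda_j=\sum_j\Re z_j$, so $e^{-\sum_j\lambda_j/2}=e^{-\sum_j\Re z_j/2}$, matching the exponential in~\eqref{huge11}. For the hard-edge weight, I use $\prod_j\lambda_j=|\Re\prod_j z_j|$ so that $\prod_j\lambda_j^{\beta a/2}=|\Re\prod_j z_j|^{\beta a/2}$; combining this with the surviving $|\prod_j\lambda_j|^{-1}=|\Re\prod_j z_j|^{-1}$ from the Jacobian and with the $|\Re\prod_j z_j|^{-1}$ and $(2l)^{-n}$ released by $\prod_j w_j^{\beta/2-1}=(\prod_j w_j)^{\beta/2-1}$ via~\eqref{Jaco33} will assemble the net power $|\Re\prod_j z_j|^{a\beta/2-\beta/2}$ and the mirror product $\prod_{j,k}|z_j-\bar z_k|^{\beta/2-1}$, exactly as in~\eqref{huge11}. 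Collecting the $l$-powers from the Jacobian, from $\prod_j w_j$, and the leftover $F(l)$ yields $l^{1-\beta n/2}F(l)$, and all stray $\prod_{j<k}|\lambda_j-\lambda_k|$ factors cancel between the Vandermonde $|\lambda_j-\lambda_k|^\beta$ in~\eqref{eq:Laguerre1}, the $|\lambda_j-\lambda_k|^2$ in~\eqref{Jaco1}, and the $|\lambda_j-\lambda_k|^{-2}$ produced by $\prod_j w_j^{\beta/2-1}$. The main obstacle is purely bookkeeping: tracking the precise powers of $|\Re\prod_j z_j|$, of $l$, of $2$, and of the $|\lambda_j-\lambda_k|$ factors so that they cancel cleanly and reproduce the constant $C_{\beta,n,m}=s_{\beta,n,m}2^{n(\beta/2-1)}$; there is no conceptual difficulty beyond the Gaussian case, only the replacement of the weight and the resulting $|\Re\prod_j z_j|^{a\beta/2-\beta/2}$ factor.
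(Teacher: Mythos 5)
Your proposal is correct and takes essentially the same route as the paper, whose proof is precisely the observation that the Gaussian argument runs verbatim with \eqref{eq:Laguerre1} as the starting density: apply the Jacobian \eqref{Jaco1}, then use \eqref{Rekj0}, \eqref{magni1}, \eqref{magni2}, \eqref{Jaco33} to express everything in the $z_j$'s. One transcription slip: the change of variables multiplies the parameter density by the \emph{reciprocal} of \eqref{Jaco1}, namely $l^{1-n}\prod_{1\le j<k\le n}\bigl(|z_j-z_k|^2/|\lambda_j-\lambda_k|^2\bigr)\,\bigl|\prod_{j=1}^n\lambda_j\bigr|^{-1}$ (you inverted the Vandermonde ratio while correctly reciprocating the $l$ and $\prod_j\lambda_j$ factors), but your subsequent bookkeeping --- the cancellation of all $\lambda$-Vandermonde factors and the final powers of $l$, $2$, and $\bigl|\Re\prod_j z_j\bigr|$ --- tacitly uses the correct factor, so the argument goes through as written.
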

\begin{proof}
	The proof for this case is identical to the proof of Theorem \ref{Gauss} with the only difference being the starting distribution~\eqref{eq:Laguerre1}.	 
	Indeed, applying the Jacobian~\eqref{Jaco1} to~\eqref{eq:Laguerre1}, and then using~\eqref{Rekj0}, \eqref{magni1}, \eqref{magni2}, \eqref{Jaco33} produces~\eqref{huge11}.
\end{proof}
\begin{remark}
For comparison, the joint distribution of rank-one {\it additive} perturbations $\calJ+ i l e_1 e_1^*$ is proportional to
\begin{equation*}
	e^{- \sum_{j=1}^n \Re \,z_j/2 } \Big\vert \Re\big(\prod_{j=1}^n z_j\big) \Big\vert^{a\beta /2} \prod_{j,k=1}^n |z_j-\bar{z}_k |^{\beta/2 -1} \prod_{1\le j<k\le n} |z_j-z_k|^2 
	  \frac{F(l)}{l^{\tfrac{\beta n}{2}-1}} d^2 z_1\ldots d^2 z_n,
	\end{equation*}
where $l=\sum_{j=1}^n \Im\, z_j$, see  ~\cite{Koz17} for the proof and more details.
These two distributions look astonishingly similar to each other, even more so than for the Gaussian $\beta$-ensembles, see Remark 1 after Theorem~\ref{Gauss}. This is very likely not a coincidence. One could envision that the elegant trick of Forrester~\cite[Sect~3.1]{ForrRev} could provide an explanation here at least for $\beta=1,2$. The idea there is that the spectrum of a multiplicative perturbation of Wishart matrix $X^* X \Omega$ can be related to the spectrum of $X^*\Omega X$. The latter matrix can be shown to have the structure of an {\it additive} perturbation of a Wishart matrix with one less column. This explains the shift by $1$ in the parameter $a$ for the two distributions above. 
Observe that this trick will not work for Gaussian matrices. 
\end{remark}

\subsection{Eigenvalue density for multiplicative perturbations: $m\leq n-1$ case.}\label{ss:Laguerre2}


First we need the analogue of Lemma~\ref{lm:Jacq} for measures with a mass point at $0$. 
\begin{lemma}\label{lm:Jacq2}
	Let $\mu$ be given by~\eqref{spectralMwith0},~\eqref{domainwith0}, and let $q(z)= \sum_{j=0}^{m} q_j z^j$ with $q_{m}=1$ be the characteristic polynomial of $\calJ^{(1)}$. Treating $\lambda_j$  as constants,
	\begin{equation}\label{eq:Jacq2}
		\left\lvert \det{\frac{\partial{(q_0,\ldots, q_{m-1})}}{\partial{(w_1,\ldots, w_{m})}}} \right\rvert= \prod_{j=1}^m|\lambda_j| \prod_{1\leq j<k\leq m}   |\lambda_j-\lambda_k|.
	\end{equation}
\end{lemma}
\begin{proof}
	This is~\cite[Eq.(5.36)]{Koz17} without the $l^{m}$ factor and without the factor of 
	$$
	\left\lvert \det\frac{\partial(\Re\,\kappa_1,\ldots,\Re\,\kappa_m)}{\partial(\lambda_1,\ldots,\lambda_m)}\right\rvert= \prod_{j<k}|\lambda_j-\lambda_k|.
	$$
	Indeed, in their setting $-\frac1l \sum_{j=0}^{m} (\Im\,\kappa_j) z^j$ is our $q(z)$. 
\end{proof}

Compared to the distribution in Theorems \ref{thm:Lag1}, the distribution~\eqref{huge111} picks up a factor with a power of $\lvert\Im\big(\prod_{j=1}^m z_j\big)-l\Re\big(\prod_{j=1}^m z_j\big)\rvert$ that replaces the factor with a power of $\lvert\Re\big(\prod_{j=1}^m z_j\big)\rvert$ in~\eqref{huge11}.
\begin{theorem}\label{laguerredifficult}
	Suppose $\beta>0$,  $m\leq n-1$, $\calJ$ belongs to the Laguerre $\beta$-ensemble $\mathrm{L}\beta\mathrm{E}_{(m,n)}$, and  $l>0$ is fixed. 
	Then $\calJ_{l,\times}$ has one zero eigenvalue and the remaining $m$ eigenvalues 
	are distributed on
	\begin{equation}
		\left\{ \{z_j\}_{j=1}^{m} \in (\bbC_+)^{m}: \sum_{j=1}^{m} \Arg\, z_j < \arctan l
		\right\}
	\end{equation}
	according to
	\begin{align}
		\frac{l^{1-\beta n/2}}{C_{\beta,m,n}} & e^{-\sum_{j=1}^m \Re\, z_j/2}
		 \left\lvert\Im\Big(\prod_{j=1}^m z_j\Big)-l\Re\Big(\prod_{j=1}^m z_j\Big)\right\rvert^{\beta(n-m)/2-1}
		 \nonumber\\
		&\times \prod_{j,k=1}^m |z_j-\bar{z_k}|^{\beta/2-1}\prod_{1\leq j<k\leq m} |z_j-{z_k}|^2\,
		   d^2 z_1\ldots d^2 z_m.\label{huge111}
	\end{align}
where $C_{\beta,m,n}=t_{\beta,n,m}2^{m(\beta/2-1)}$.
\end{theorem}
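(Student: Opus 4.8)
The plan is to mimic the proof of Theorem \ref{Gauss}, but with two structural differences: the spectral measure now carries a mass point $w_0$ at the origin, so the parametrization is $\{\lambda_1,\ldots,\lambda_m,w_1,\ldots,w_m\}$ together with the constraint $\sum_{j=0}^m w_j=1$, and the matrix $\calJ$ is singular ($\det\calJ=0$), so $\calJ_{l,\times}$ has a forced zero eigenvalue by Lemma \ref{thm:HB}(c). First I would start from the joint density \eqref{eq:LagJointHard} and set up the change of variables from $(\lambda_1,\ldots,\lambda_m,w_1,\ldots,w_m)$ to the nonzero eigenvalues $(z_1,\ldots,z_m)$ of $\calJ_{l,\times}$; here $l$ is fixed, so unlike the Gaussian case there is no $l$-variable in the Jacobian and the map is between real spaces of equal dimension $2m$. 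The zero eigenvalue is automatic and contributes no variable, so the factor $w_0^{\beta(n-m)/2-1}$ in \eqref{eq:LagJointHard} must be re-expressed in terms of the $z_j$'s and is where the new $\lvert\Im(\prod z_j)-l\Re(\prod z_j)\rvert$ factor will originate.

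The key steps, in order, are as follows. (i) Compute the Jacobian of the map $(\lambda_j,w_j)\mapsto(\Re z_j,\Im z_j)$ using Lemma \ref{lm:Jacq2} in place of Lemma \ref{lm:Jacq}; the extra factor $\prod_{j=1}^m|\lambda_j|$ in \eqref{eq:Jacq2} (absent in \eqref{eq:Jacq}) reflects the mass point and will combine with the Vandermonde factors, producing a Jacobian of the shape $\prod_{j<k}|\lambda_j-\lambda_k|^2/|z_j-z_k|^2$ times a power of $\prod|\lambda_j|$. (ii) Use the analogue of \eqref{eq:HB}, namely $\det(zI-\calJ_{l,\times})=(1+il)p(z)-ilzq(z)$ with $\deg p=m+1$, $\deg q=m$, and read off the relation between the symmetric functions of the $z_j$'s and those of the $\lambda_j$'s; since $p(0)=0$ here (as $\det\calJ=0$), the constant and linear coefficients behave differently from the Gaussian case. (iii) Convert $\prod_j\lambda_j^{\beta a/2}$, $e^{-\sum\lambda_j/2}$, and $\prod_{j<k}|\lambda_j-\lambda_k|^\beta$ into $z$-variables via the identities $\sum\lambda_j=\sum\Re z_j$, $\prod_{j<k}|\lambda_j-\lambda_k|$ absorbed by the Jacobian, and a residue computation analogous to \eqref{Jaco30}--\eqref{Jaco33} giving $\prod w_j$ in terms of $\prod|z_j-\bar z_k|$.

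The main obstacle will be step (ii)--(iii): correctly identifying what $w_0$ becomes in the $z$-coordinates. Because $\calJ$ is singular, the generating identity $\frac{1}{2}\prod(z-z_j)+\frac{1}{2}\prod(z-\bar z_j)=p(z)$ (the analogue of \eqref{Jaco32}) involves the degree-$(m+1)$ polynomial $p$, whereas the $z_j$ are only $m$ in number together with the fixed root at $0$; I expect that evaluating the Herglotz/Weyl relation \eqref{Jaco30} at $z=0$, or equivalently tracking the residue at the removed mass point, will yield $w_0$ proportional to $\lvert\Im(\prod_{j=1}^m z_j)-l\,\Re(\prod_{j=1}^m z_j)\rvert$ divided by a product of $|\lambda_j|$'s, which is precisely the factor advertised before the theorem. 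The remaining work is bookkeeping of powers of $2$, $l$, and $\prod|\lambda_j|$ to check that everything collapses to $l^{1-\beta n/2}$ and the stated normalization $C_{\beta,m,n}=t_{\beta,n,m}2^{m(\beta/2-1)}$, with the $\prod|\lambda_j|$ factors cancelling against the Jacobian and the $\lambda_j^{\beta a/2}$ weight exactly as in Theorem \ref{thm:Lag1}.
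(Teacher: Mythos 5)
Your plan follows essentially the same route as the paper's proof: Lemma~\ref{thm:HB}(c) for the forced zero eigenvalue, the Jacobian via Lemma~\ref{lm:Jacq2}, and the evaluation of the Herglotz relation at $z=0$ to identify $w_0$ with $\bigl\lvert\Im\prod_j z_j - l\Re\prod_j z_j\bigr\rvert/\bigl(l\,\bigl\lvert\Re\prod_j z_j\bigr\rvert\bigr)$, plus residues at the $\lambda_j$ for the remaining weights --- all exactly as in the paper. The only slight imprecision is the final bookkeeping remark: here the powers of $\prod_j\lvert\lambda_j\rvert$ cancel \emph{completely} (no analogue of the $\lvert\Re\prod_j z_j\rvert^{a\beta/2-\beta/2}$ factor of Theorem~\ref{thm:Lag1} survives), but this is a computation your plan would carry out correctly.
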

\begin{proof}
	By Lemma~\ref{thm:HB}(c), $\calJ_{l,\times}$ has a zero eigenvalue, so we let
	\begin{equation}\label{eq:charLagHard}
		\det(z I_{m+1}-\calJ_{l,\times})=z\prod_{j=1}^m (z-z_j)= z\sum_{j=0}^m \kappa_j z^j
	\end{equation}
	with $\kappa_m=1$.
	The Jacobian from $z_j$'s to $\kappa_j$'s is known (see~\eqref{vande0}), so we are left with computing the Jacobian of the transformation from $\{\kappa_j\}_{j=0}^{m-1}$ to $\{\lambda_j\}_{j=1}^m$, $\{w_j\}_{j=1}^m$, in~\eqref{spectralMwith0},~\eqref{domainwith0}.
	
	Using \eqref{eq:HB} and dividing by $z$, we get:
	\begin{equation}\label{rew1}
		\sum_{j=0}^m \kappa_j z^j=\prod_{j=1}^m (z-\lambda_j)+il\prod_{j=1}^m (z-\lambda_j)-ilq(z),
	\end{equation}
	where $q(z)$ is the characteristic polynomial of $\calJ^{(1)}$. 
	
	It follows that
	\begin{align}
		& \label{Rekj}
		\sum_{j=0}^{m}(\Re\, \kappa_j)z^j=\prod_{j=1}^m (z-\lambda_j), \\
		& \label{rew2}
		\sum_{j=0}^{m-1} (\Im \,\kappa_j) z^j=l\prod_{j=1}^m (z-\lambda_j)-lq(z).
	\end{align}
	Then~\eqref{Rekj} implies 
	\begin{align}
		\label{Jacob11}
		&\left\lvert \det{\frac{\partial{(\Re\,\kappa_0,\ldots, \Re\,\kappa_{m-1})}}{\partial{(\lambda_1,\ldots, \lambda_{m})}}} \right\rvert= \prod_{1\leq j<k\leq m}  |\lambda_j-\lambda_k|,
		\\
		\label{0mat1}
		&{\frac{\partial{(\Re\,\kappa_0,\ldots, \Re\,\kappa_{m-1})}}{\partial{(w_1,\ldots, w_{m})} }}=[0]_{m\times m}.
	\end{align}

	Treating $l$ and $\lambda_j$'s as constant and using~\eqref{rew2} and Lemma~\ref{lm:Jacq2} we obtain
	\begin{equation}\label{kap31}
	\left\lvert \det{\frac{\partial{(\Im\,\kappa_0,\ldots, \Im\,\kappa_{m-1})}}{\partial{(w_1,\ldots, w_{m})}}} \right\rvert=l^{m} \prod_{j=1}^m\lambda_j \prod_{1\leq j<k\leq m}  |\lambda_j-\lambda_k|. 
	\end{equation}
	Just like in the proof of Lemma~\ref{lem:Jacobian}, using the block structure and the presence of the zero block~\eqref{0mat1} we now can compute the Jacobian 
	\begin{align}\label{kap61}
	\left\lvert \det{\frac{\partial{(\Re\, z_1,\ldots,\Re\, z_{m},\Im\, z_{1},\ldots,\Im\, z_{m})}}{\partial{(\lambda_1,\ldots,\lambda_m,w_1,\ldots,w_m)}}}\right\rvert= \frac{l^{m}\prod_{1\leq j<k\leq m}  |\lambda_j-\lambda_k|^2 \left\lvert\prod_{j=1}^m \lambda_j\right\rvert}{ \prod_{1\leq j<k\leq m}|z_j-z_k|^2}.
\end{align}	

Applying the change of variables~\eqref{kap61} to the joint density~\eqref{eq:LagJointHard} we obtain the joint density of $z_j$'s:
\begin{equation}\label{longbigeq}
	\frac{w_0^{\beta(n-m)/2-1}}{l^m t_{\beta,n,m}}\prod_{j=1}^m \left(\lambda_j^{\beta a/2-1} e^{-\lambda_j/2}\right)  \prod_{j<k}  |\lambda_j-\lambda_k|^{\beta-2} \prod_{j=1}^m  w_j^{\beta/2-1} \prod_{1\leq j<k\leq m}|z_j-z_k|^2.
\end{equation}
To complete the proof we need to  express \eqref{longbigeq} in terms of the $z_j$'s.
By checking the $z^0$ and $z^{m-1}$ term of~\eqref{Rekj}, see also~\eqref{eq:charLagHard}, we get 
\begin{align}
& \prod_{j=1}^m \lambda_j=\Re\prod_{j=1}^m z_j\label{theeq1},\\
& \prod_{j=1}^m e^{ -\lambda_j/2}
=e^{-\sum_{j=1}^m \Re\, z_j/2}.\label{theeq2}
\end{align}
Denote $m(z) = -\frac{q(z)}{p(z)}$, where $p(z) = z\prod_{j=1}^m (z-\lambda_j)$. Then \eqref{eq:charLagHard} and~\eqref{rew1} imply 
\begin{equation}\label{kap81}
\frac{\prod_{j=1}^m (z-z_j)}{\prod_{j=1}^m (z-\lambda_j)}=1+il+ilz{m}(z).
\end{equation}
Because of 
\eqref{spectralMwith0},
 ${m}(z)= \frac{-w_0}{z}+ \sum_{j=1}^m \frac{w_j}{\lambda_j-z}$.  Taking the limit of both sides in \eqref{kap81} as $z\rightarrow 0$, we get
\begin{equation}\label{kap91}
	\frac{\prod_{j=1}^m z_j}{\prod_{j=1}^m \lambda_j}=1+il-ilw_0.
\end{equation}
Substituting \eqref{theeq1} into \eqref{kap91}, we get
\begin{equation}\label{theeq3}
	w_0=\left\lvert \frac{\Im\prod_{j=1}^m z_j-l\Re\prod_{j=1}^m z_j }{l\Re\prod_{j=1}^m z_j} \right\rvert.
\end{equation}
Similarly, for $j\neq 0$, taking residues at $\lambda_j$ in \eqref{kap81} implies
\begin{equation}\label{kap101}
	w_j=\left\lvert \frac{\prod_{k=1}^m (\lambda_j-z_k)}{l\lambda_j\prod_{k\neq j} (\lambda_j-\lambda_k)} \right\rvert.
\end{equation}
Using the equation
\begin{equation}\label{kap111}
	\frac{1}{2}\prod_{j=1}^m(z-z_j)+	\frac{1}{2}\prod_{j=1}^m(z-\bar{z}_j)=\sum_{j=0}^m (\Re\, \kappa_j)z^j=\prod_{j=1}^m (z-\lambda_j)
\end{equation}
for $z=z_k$, $k=1,\ldots,m$, and then combining it with \eqref{kap101}
we get
\begin{equation}\label{theeq4}
	\prod_{j=1}^m w_j=\left\lvert\frac{\prod_{j,k}(\lambda_j-z_k)}{l^m \prod_{j=1}^m \lambda_j \prod_{j<k}|\lambda_j-\lambda_k|^2}\right\rvert=\left\lvert\frac{\prod_{j,k}(\bar{z_j}-z_k)}{(2l)^m \prod_{j=1}^m \lambda_j \prod_{j<k}|\lambda_j-\lambda_k|^2}\right\rvert
\end{equation}
Substituting \eqref{theeq1},\eqref{theeq2}, \eqref{theeq3}, \eqref{theeq4} into \eqref{longbigeq} we 
obtain \eqref{huge111} which completes the proof.

\end{proof}
	

\section{Multiplicative perturbations of the classical ensembles}

\subsection{Multiplicative perturbations of $\mathrm{GOE}_n$ and $\mathrm{GUE}_n$}
\label{ss:GOE}

Let $X_{m\times n}$ be an $m\times n$ matrix with entries being i.i.d. real ($\beta=1$) or complex ($\beta=2$) centered normal random variables with  $\mathbb{E}(|X_{ls}|^2) = \beta$ for all $l,s$.	
Then we say that the $n\times n$ random  Hermitian matrix
$H=\tfrac{1}{2} (X_{n\times n} + X_{n\times n}^*)$ belongs to the Gaussian orthogonal ($\mathrm{GOE}_n$) or Gaussian unitary ensemble ($\mathrm{GUE}_n$), respectively. 

A (real) vector $v$ is called cyclic on an $n$-dimensional vector space $V$ with respect to a matrix $A$ if $v, Av,\ldots, A^{n-1}v$ is a basis for $V$. 

Assume that the first standard column $n$-vector $e_1$ is cyclic for $H$ (which is a probability 1 event). Then  Dumitriu--Edelman~\cite{Dumede02} (see also Trotter's~\cite{Tro}) showed that $H$ from $\mathrm{GOE}_n$ and $\mathrm{GUE}_n$ 
can be reduced to the tridiagonal form ~\eqref{genericJwithn} with~\eqref{eq:GbetaCoefficients}
via $H  = S^*\calJ S$ with an orthogonal/unitary  (for $\beta =1$ or $2$, respectively) matrix $S$  satisfying 
\begin{equation}\label{S-equ}
	Se_1=S^* e_1=e_1.
\end{equation}

Let us now consider the multiplicative perturbation $H_{l,\times}$ of $H$ as in~\eqref{eq:mult}. Here   $\Gamma$ is an $n\times n$ positive definite matrix of rank 1 which is 
independent of $H$
 with real (if $\beta=1$) or complex (if $\beta=2$)
entries. 
Let $l=||\Gamma||_{HS}$ be the Hilbert--Schmidt norm of $\Gamma$. Then there is an orthogonal/unitary matrix $U$ (for $\beta =1,2$, respectively) such that $\Gamma=U(l e_1 e_1^*) U^*$. 
Then
\begin{equation}\label{eq:multiplicativePert}
	H_{l,\times}:= (I_n+i\Gamma)H = U(I_n+il e_1 e_1^*)U^*H .
\end{equation}
By the orthogonal/unitary invariance of $\mathrm{GOE}$/$\mathrm{GUE}$, the matrix $U^*H U$ belongs to the same ensemble as $H$. 
By the Dumitriu--Edelman tridiagonalization method described above, we can find a unitary $S$ and a tridiagonal matrix $\mathcal{J}$~\eqref{genericJwithn} such that $U^*HU= S^*\mathcal{J} S$. Using this along with \eqref{S-equ} we obtain
\begin{align}
	\label{eq:argument1}
	SU^* H_{l,\times} US^*&= S(I_n+il e_1 e_1^*)U^*H US^*\\
	\label{eq:argument2}
	&=S(I_n+il e_1 e_1^*)S^*\mathcal{J}\\
	& =\mathcal{J}+ il  e_1 e_1^*  \mathcal{J}, 
	\label{J-rep}
\end{align}
where $\mathcal{J}$ is from the Gaussian $\beta$-ensemble with $\beta=1,2$, respectively. This shows that the distribution of the eigenvalues of multiplicative perturbations ~\eqref{eq:multiplicativePert} for $\mathrm{GOE}_n$/$\mathrm{GUE}_n$ coincides with the distribution found in Theorem~\ref{Gauss} with $\beta=1$ and $2$, respectively, with $l=||\Gamma||_{HS}$.

\subsection{Multiplicative perturbations of $\mathrm{LOE}_{(m,n)}$ and $\mathrm{LUE}_{(m,n)}$}
\label{ss:LOE}
The same argument works for the Wishart orthogonal and unitary ensembles which we discuss now.
Given $X_{m\times n}$ as in Section~\ref{ss:GOE}, 
we say that the $n\times n$ random Hermitian matrix $X_{m\times n}^* X_{m\times n}$ belongs to the Wishart orthogonal/unitary 
($\mathrm{LOE}_{(m,n)}$, $\mathrm{LUE}_{(m,n)}$), respectively. 

If $m\ge n$ then~\cite{Dumede02} the tridiagonalization of $\mathrm{LOE}_{(m,n)}$, $\mathrm{LUE}_{(m,n)}$ is the $n\times n$ matrix ~\eqref{bidiag} with~\eqref{eq:LagCoeff} with $\beta=1,2$, respectively. Then the same argument as for $\mathrm{GOE}_n$/$\mathrm{GUE}_n$ above shows that the distribution of the eigenvalues of multiplicative perturbations
\begin{equation}\label{eq:multiplicativePertWishart}
	H_{l,\times}:= (I_n+i\Gamma)X^* X 
\end{equation}
of Wishart matrices $X^* X$ from $\mathrm{LOE}_n$/$\mathrm{LUE}_n$ coincides with the distribution found in Theorem~\ref{thm:Lag1} with $\beta=1$ and $2$, respectively, with $l=||\Gamma||_{HS}$.

If $m\le n-1$ then the tridiagonalization of the $n\times n$ matrix $\mathrm{LOE}_{(m,n)}$, $\mathrm{LUE}_{(m,n)}$ has the following form: it has block diagonal structure with the first $m\times m$ block being ~\eqref{bidiag2} with~\eqref{eq:LagCoeff2}, and the second block being $(n-m-1)\times(n-m-1)$ zero matrix, see~\cite[Lem 2(ii)]{Koz17}.
This is due to the fact that $e_1$ is not cyclic for $H$ in this case, so the spectral measure of $\calJ$ is supported on $m+1$ points (one of which is 0). Effectively, the spectral measure $\mu$ ignores the remaining $n-m-1$ zero eigenvalues of  $H$ that comes from the final $(n-m-1)\times(n-m-1)$ zero block of the  tridiagonalization of $H$.

As a result,  the spectrum of multiplicative perturbations ~\eqref{eq:multiplicativePertWishart} for $X^* X$ from $\mathrm{LOE}_{(m,n)}$/$\mathrm{LUE}_{(m,n)}$ for the case $m\le n-1$ consists of $n-m$ zero eigenvalues ($n-m-1$ coming from the zero block and $1$ from Theorem~\ref{laguerredifficult}) and, additionally, random points distributed according to ~\eqref{huge111}.


\subsection{Multiplicative perturbations of $\mathrm{GSE}_n$ and  $\mathrm{LSE}_{(m,n)}$}\label{ss:GSE}

Let us also briefly discuss Gaussian and Laguerre symplectic ensembles $\mathrm{GSE}_n$ and $\mathrm{LSE}_{(m,n)}$ 
($\beta=4$) and the difference from the $\beta=1,2$ cases above. 

Denote the algebra of real quaternions by $\bbH$. For $\bm{q}\in\bbH$, $\bm{q}=q_1+q_2 \bm{i} +  q_3 \bm{j}+  q_4\bm{k}$ (with $q_j\in\bbR$)  we let 
\begin{align*}
	\bar{\bm{q}} &:=q_1-q_2 \bm{i}-  q_3 \bm{j}- q_4\bm{k}, \\
	|\bm{q}| &:=\sqrt{\bm{q}\bar{\bm{q}}}=\sqrt{q_1^2+q_2^2+q_3^2+q_4^2}.
\end{align*}
For a matrix $\bm{A}=(\bm{a}_{ls})$  of real quaternions we define the conjugate transposed matrix $\bm{A}^*:= (\bar{\bm{a}}_{sl})$ as usual. We call $\bm{U}$ unitary if $\bm{U} \bm{U}^* = \bm{U}^*\bm{U} = \bm{I}_n$. 

Let $\bm{X}_{m\times n}$ be an $m\times n$ matrix whose entries are i.i.d. normal real quaternions $X_{ls}\in\bbH$ with $\mathbb{E}(|X_{ls}|^2) = 4$ for all $1\le l\le m$, $1\le s\le n$. Then an $n\times n$ quaternionic matrix $\tfrac{1}{2} (\bm{X}_{n\times n} + \bm{X}_{n\times n}^*)$ is said to belong to the Gaussian symplectic ensemble $\mathrm{GSE}_n$ and  $\bm{X}_{m\times n}^* \bm{X}_{m\times n}$ is said to belong to the Wishart symplectic ensemble $\mathrm{LSE}_{(m,n)}$, cf. the construction for orthogonal/unitary ensembles above.

The triadiagonalization of $\bm{H}$ from $\mathrm{GSE}_n$ and $\mathrm{LSE}_{(m,n)}$ 
takes the form
$\bm{H}  = \bm{S}^*\bm{\calJ} \bm{S}$ with a unitary matrix $\bm{S}$  satisfying 
\begin{equation}\label{S-equ2}
	\bm{S}\bm{e}_1=\bm{S}^* \bm{e}_1=\bm{e}_1
\end{equation}
(here $\bm{e}_1$ is the $n$-dimensional vector of real quaternions with $1$ as the first entry and $0$ everywhere else). The resulting Jacobi matrix $\bm{\calJ}$ has the exact same form as the  Gaussian or Laguerre 
$\beta$-ensemble, respectively, with $\beta=4$, with the only wrinkle being that each of the Jacobi coefficients $\bm{a}_j$'s and $\bm{b}_j$'s are still quaternions that happen to be purely real (that is, their $\bm{i}$-, $\bm{j}$-, $\bm{k}$-components are zero). 


Now we consider the multiplicative perturbation
\begin{equation}\label{eq:multiplicativePertGSE}
	\bm{H}_{l,\times}:= (\bm{I}_n+\bm{\Gamma})\bm{H},
\end{equation}
where $\bm\Gamma = -\bm{\Gamma}^*$ is skew-Hermitian and of rank $1$ (see~\cite{Rod}). Such matrices can be diagonalized as
\begin{equation}\label{eq:quatDiagonalization}
	i\Gamma = \bm{U}(l \bm{i} \bm{e}_1 \bm{e}_1^*) \bm{U}^*,
\end{equation}
with $\bm{U}$ unitary, see~\cite[Thm 5.3.6(d)]{Rod} for the proof, 
with $l = \big(\sum_{j,k=1}^n |\bm{\Gamma}_{jk}|^2\big)^{1/2}$.
Note that in~\eqref{eq:quatDiagonalization}, $\bm{i} \bm{e}_1 \bm{e}_1^*$ is just the $n\times n$ quaternionic matrix with $\bm{i}$ in the $(1,1)$-entry and $\bm{0}$ everywhere else.

The remaining arguments in~\eqref{eq:multiplicativePert} and~\eqref{eq:argument1}--\eqref{J-rep} goes through without changes and lead us to studying multiplicative perturbations of Jacobi matrices
\begin{equation}\label{eq:quaternionJacobiMult}
	\bm{\calJ}_{l,\times}=(\bm{I}_n+l\bm{i} \bm{e}_1 \bm{e}_1^*)\bm{\mathcal{J}}.
\end{equation} 

Finally, to study the eigenvalues of these matrices, it is convenient to represent each  quaternion $\bm{q}=q_1+q_2 \bm{i} +  q_3 \bm{j}+  q_4\bm{k}$  as the $2\times 2$ complex matrix 
\begin{equation}\label{eq:quaternion}
	\begin{pmatrix}
		q_1+iq_2 & q_3+iq_4 \\
		-q_3+iq_4 & q_1-iq_2
	\end{pmatrix},
\end{equation}
and any $n\times n$ quaternionic matrix as $2n\times 2n$ complex matrix with the corresponding block structure. 

Now observe that in~\eqref{eq:quaternionJacobiMult} matrices $\bm{I}_n$ and $\bm{\calJ}$ are then represented as the $2n\times 2n$ complex matrices with each $2\times 2$ block being a multiple of $I_2$. The matrix $l\bm{i} \bm{e}_1 \bm{e}_1^*$ is represented as the $2n\times 2n$ complex matrix with the block 
\begin{equation*}
	\begin{pmatrix}
		li & 0\\
		0 & -li
	\end{pmatrix},
\end{equation*}
in the top-left corner and $0$'s everywhere else. We can conclude then that the complex-matrix representation of $\bm{\calJ}_{l,\times}$ splits into the direct sum of two $n\times n$ matrices $({I}_n+l{i} {e}_1 {e}_1^*){\mathcal{J}}$ and $({I}_n-l{i} {e}_1 {e}_1^*){\mathcal{J}}$, where $\calJ$ is the Jacobi matrix from the corresponding (scalar) $\beta$-ensemble with $\beta =4$. The resulting spectrum is therefore composed of the same eigenvalues as in Theorems~\ref{Gauss}, \ref{thm:Lag1}, and~\ref{laguerredifficult}, respectively, but combined  with their complex conjugated images. 

We remark that the same ``eigenvalue splitting'' effect happens for the additive perturbations of Gaussian, Wishart, and chiral Gaussian symplectic ensembles. This was overlooked in our~\cite{Koz17} and \cite{AlpRos}.

\subsection{Multiplicative perturbations of Chiral Gaussian ensembles $\mathrm{chGOE}_{(m,n)}$, $\mathrm{chGUE}_{(m,n)}$, $\mathrm{chGSE}_{(m,n)}$}\label{ss:chiral}
Let  $X_{m\times n}$ be again as in Section~\ref{ss:GOE} with $\beta=1, 2$, respectively, and as in Section~\ref{ss:GSE} if $\beta=4$. We say that the $(m+n)\times(m+n)$ random  Hermitian matrix 
\begin{equation}\label{eq:chiral}
	{H} = \begin{pmatrix}
		\textbf{0}_{n\times n} & X_{m\times n}^* \\
		X_{m\times n}& \textbf{0}_{m\times m}
	\end{pmatrix}
\end{equation}
belongs to the chiral Gaussian  orthogonal ($\beta=1$),  unitary $(\beta=2$), or symplectic ($\beta=4$) ensemble 
($\mathrm{chGOE}_{(m,n)}$, $\mathrm{chGUE}_{(m,n)}$, $\mathrm{chGSE}_{(m,n)}$),
respectively. Such matrices, which can be viewed as a random Dirac operator, are natural from the point of view of the quantum chromodynamics. 
A basic algebra (see, e.g.,~\cite[Prop 3.1.1]{Forrester-book}) tells us that non-zero eigenvalues of the chiral ensembles coincide with the $\pm$ square roots of the eigenvalues of the corresponding Laguerre ensembles.

Just as for the Gaussian and Laguerre ensembles, one can reduce these ensembles to the Jacobi form $\tilde\calJ$, see~\cite{AlpRos,dum2,jac} and compute the spectral measure~\cite[Sect~4]{AlpRos}. 

It is natural to consider multiplicative perturbations
\begin{equation*}
	H_{\times} = (I_{n+m}+\Omega)H,
\end{equation*}
where $\Omega=\Omega^*$ an $(n+m)\times(n+m)$ matrix of rank 1 with real or complex
for $\beta=1,2$, respectively,  with the block structure 
\begin{equation}
	{\Omega}=
	\begin{pmatrix}
		i\Gamma & \textbf{0}_{n\times m}\\\
		\textbf{0}_{m\times n} &\textbf{0}_{m\times m}
	\end{pmatrix}
\end{equation}
(for $\beta=4$ case we replace $i\Gamma$ with quaternionic skew-Hermitian matrix of rank $1$, in a similar fashion as in Section~\ref{ss:GSE}). 
Multiplying this out, we obtain
\begin{equation}\label{eq:multChiral}
	H_{\times} = 
	 \begin{pmatrix}
		\textbf{0}_{n\times n} & (I_n+i\Gamma) X_{m\times n}^* \\
		X_{m\times n}& \textbf{0}_{m\times m}
	\end{pmatrix}.
\end{equation}
Using the Schur complement formula one easily obtains
\begin{align}
	\det(\lambda I_{n+m} - H_\times) & = \det(\lambda I_m) \det(\lambda I_n - \lambda^{-1} (I_n+i\Gamma) X_{m\times n}^* X_{m\times n}) 
	\\
	& =
	\lambda^{m-n} \det(\lambda^2 I_n - (I_n+i\Gamma) X_{m\times n}^* X_{m\times n}).
\end{align}
This shows that nonzero eigenvalues of $H_\times$ are precisely $\pm$ square roots of the eigenvalues of the multiplicative perturbations~\eqref{eq:mult} of the Laguerre ensembles which are computed in Theorems~\ref{thm:Lag1} and ~\ref{laguerredifficult}.

\smallskip

One may argue that instead of~\eqref{eq:multChiral} a more natural model for the non-Hermitian multiplicative perturbations of chiral ensembles would be
\begin{multline*}
\begin{pmatrix}
	I_n + i\Gamma & \textbf{0}_{n\times m}\\\
	\textbf{0}_{m\times n} & I_m
\end{pmatrix}
\begin{pmatrix}
	\textbf{0}_{n\times n} & X_{m\times n}^* \\
	X_{m\times n}& \textbf{0}_{m\times m}
\end{pmatrix}
\begin{pmatrix}
	I_n  + i\Gamma & \textbf{0}_{n\times m}\\\
	\textbf{0}_{m\times n} & I_m
\end{pmatrix}
\\=
\begin{pmatrix}
	\textbf{0}_{n\times n} & (I_n+i\Gamma) X_{m\times n}^* \\
	X_{m\times n} (I_n+i\Gamma) & \textbf{0}_{m\times m}
\end{pmatrix}
\end{multline*}
Indeed, as above this can be related to the eigenvalues of 
\begin{equation}\label{eq:last}
(I_n+i\Gamma)  X_{m\times n}^* X_{m\times n} (I_n+i\Gamma)
\end{equation}
which has a strong resemblance with the spiked Wishart models.

However, since for square matrices $A$ and $B$ the spectra of $AB$ and $BA$ coincide, the spectrum of  perturbations~\eqref{eq:last} can be reduced to the spectrum of $(I_n+i\Gamma)^2  X_{m\times n}^* X_{m\times n}$ which is effectively our model~\eqref{eq:mult}.


\begin{thebibliography}{99}
		
		\bibitem{AlpRos} G.\ Alpan, R.\ Kozhan: \emph{Hermitian and non-Hermitian perturbations of chiral Gaussian $\beta$-ensembles}, J. Math. Phys., \textbf{63}, 043505 (2022)
		
		\bibitem{And} G. \ W. \ Anderson: \emph{A short proof of Selberg's generalized beta formula}, Forum Math. \textbf{3}, no.4, 415--417 (1991)
		
		\bibitem{Dix} A. \ L. \ Dixon: \emph{Generalization of Legendre's Formula Formula}, Proc. London Math. Soc. (2) \textbf{3}, 206--224 (1905)
		
		
		\bibitem{DubLas} G.\ Dubach, L. Erd\H{o}s: \emph{Dynamics of a rank-one perturbation of a Hermitian matrix}, Electron. Commun. Probab., \textbf{28}, 1--13 (2023)
		
		
			
			
			
		
		
		
		
		
		
		
		
				
		
		
		
		
		\bibitem{Dumede02} I.\ Dumitriu, A.\ Edelman: \emph{Matrix models for beta ensembles}, J. Math. Phys, \textbf{43}(11), 5380--5847 (2002)
		
		\bibitem{dum2}I.\ Dumitriu, P.\ J.\ Forrester: \emph{Tridiagonal realization of the antisymmetric Gaussian $\beta$-ensemble}, J. Math. Phys., \textbf{51}, 093302  (2010)
		
		\bibitem{Forrester-book} P.\ J.\ Forrester: \emph{Log-gases and random matrices}, Princeton University Press, Princeton, NJ, (2010)
		
		
		
		\bibitem{ForrRev}  P.\ J.\ Forrester: \emph{Rank 1 Perturbations in Random Matrix Theory - A Review of Exact Results}, Random Matrices: Theory Appl., \textbf{12}, No. 04, 2330001 (2023)
		
		\bibitem{fyo16}Y.\ V.\ Fyodorov: \emph{Random matrix theory of resonances: An overview}, in 2016 URSI International Symposium on Electromagnetic Theory (EMTS, 2016)
		
		
		
		\bibitem{fyokho99} Y.\ V.\ Fyodorov,  B.\ A.\ Khoruzhenko: \emph{Systematic analytical approach to correlation functions of resonances in quantum chaotic scattering}, Phys. Rev. Lett., \textbf{83} (1), 65--68 (1999)
		
		\bibitem{FKS}  Y.\ V.\ Fyodorov, B.\ A.\ Khoruzhenko, H.-J. Sommers: 		\emph{Almost-Hermitian random matrices: eigenvalue density in the complex plane},Phys. Lett. A \textbf{226}, no.1--2, 46--52 (1997)
		
		\bibitem{FKS2}  Y.\ V.\ Fyodorov, B.\ A.\ Khoruzhenko, H.-J. Sommers: \emph{Almost Hermitian random matrices: crossover from Wigner--Dyson to Ginibre eigenvalue statistics}, Phys. Rev. Lett. \textbf{79}, no.4, 557--560 (1997)
	
		\bibitem{fyosav15} Y.\ V.\ Fyodorov, D.\ Savin: \emph{Resonance Scattering of Waves in Chaotic Systems}, in The Oxford Handbook of Random Matrix Theory, 702--722. Oxford University Press, (2015)
		
		\bibitem{FyoSom96}  Y.\ V.\ Fyodorov, H.-J.\ Sommers: \emph{Statistics of S-matrix poles in few-channel chaotic scattering: crossover
		from isolated to overlapping resonances}, JETP Lett. \textbf{63} (12), 1026--1030 (1996)
		
		\bibitem{FyoSom97}  Y.\ V.\ Fyodorov, H.-J.\ Sommers: \emph{Statistics of resonance poles, phase shifts and time delays in quantum chaotic scattering: random matrix approach for systems with broken time-reversal invariance}, J. Math. Phys. \textbf{38}, no.4, 1918--1981 (1997)
		
		\bibitem{fyosom03}  Y.\ V.\ Fyodorov, H.-J.\ Sommers: \emph{Random Matrices Close to Hermitian or Unitary: Overview of Methods and Results}, J Phys A, \textbf{36}, 3303--3347 (2003)
		
		
		
		\bibitem{fyosomtit}  Y.\ V.\ Fyodorov, H.-J.\ Sommers, Y. \ V. \ Titov: 
		\emph{S-matrix poles for chaotic quantum systems as eigenvalues of
		complex symmetric random matrices: from isolated to overlapping resonances}, J. Phys. A \textbf{32} (5), L77--L87 (1999)
		
		\bibitem{jac}S.\ Jacquot, B.\ Valk\'{o}: \emph{Bulk scaling limit of the Laguerre ensemble}, Electron. J. Probab. \textbf{16}, 314--346 (2011)
		
		
		
		\bibitem{KK} R.\ Killip, R.\ Kozhan: \emph{Matrix models and eigenvalue statistics for truncations of classical unitary ensembles of random matrices}, Comm. Math. Phys., \textbf{349}, 991--1027 (2017) 
		
		\bibitem{Koz17} R.\ Kozhan: \emph{Rank one non-hermitian perturbations of hermitian $\beta$-ensembles of random matrices}. J. Stat. Phys. \textbf{168}, 92--108 (2017)
		
		\bibitem{Koz20} R.\ Kozhan: \emph{On Gaussian random matrices coupled to the discrete Laplacian}. in Oper. Theory Adv. Appl. (2020), Issue ``Analysis as a Tool in Mathematical Physics'', in memory of Boris Pavlov (eds P.Kurasov, A.Laptev, S.Naboko, and B.Simon)
		
		\bibitem{KT} R. Kozhan, M. Tyaglov: \emph{A Generalized Hermite--Biehler Theorem and Non-Hermitian Perturbations of Jacobi Matrices}, J. Math. Anal. Appl. 536, no.2, Paper No. 128241, 18 pp. (2024)
		
		\bibitem{nucl} G. E. Mitchell, A. Richter, H. A. Weidenmüller: \emph{Random Matrices and Chaos in Nuclear Physics: Nuclear Reactions},  Rev. Mod. Phys, \textbf{82}(4), 2845--2901 (2010)
		
		\bibitem{OroWoo1} S. O'Rourke and P. M. Wood: \emph{Spectra of nearly Hermitian random matrices}, Ann. Inst. Henri Poincaré Probab. Stat. \textbf{53}, no.3, 1241--1279 (2017)

				
		
		
		
		\bibitem{Rod} L. \ Rodman: \emph{Topics in quaternion linear algebra},
		Princeton University Press, Princeton, (2014)
		
		\bibitem{Sim11} B.\ Simon: \emph{Szeg\H{o}’s Theorem and Its Descendants: Spectral Theory for $L^2$ Perturbations of Orthogonal Polynomials}, Princeton University Press, Princeton, (2011)
		
		\bibitem{SokZel} V.\ V.\ Sokolov, V.\ G.\ Zelevinsky: \emph{Dynamics and statistics of unstable quantum states}, Nucl. Phys. A, \textbf{504} (3), 562--588 (1989)
		
		\bibitem{StoSeb} H.-J. St\"{o}ckmann, P.\ \v{S}eba: \emph{The joint energy distribution function for the Hamiltonian $H = H_0-iWW^+$ for the one-channel case}, J. Phys. A, \textbf{31} (15), 3439--3448 (1998)
		
		\bibitem{Tro} H. \ F. \ Trotter: \emph{Eigenvalue distributions of large Hermitian matrices; Wigner's semicircle law and a theorem of Kac, Murdock, and Szeg\H{o}}, Advances in Mathematics \textbf{54}, no. 1, 67--82 (1984)
		
		\bibitem{Ull} N.\ Ullah: \emph{On a generalized distribution of the poles of the unitary collision matrix}. J. Math. Phys., \textbf{10}, 2099--2103 (1969)
		
		
		
		
		\bibitem{Wen} B. Wendroff: \emph{On orthogonal polynomials}, Proc. Amer. Math. Soc. \textbf{12} 554--555 (1961).
		
		
		
		
		
	
		
	\end{thebibliography}
\end{document}